\newtheorem{theorem}{Theorem}[section]
\newtheorem{Theorem}{Theorem}
\newtheorem{Corollary}[Theorem]{Corollary}
\newtheorem{lemma}[theorem]{Lemma}
\newtheorem{proposition}[theorem]{Proposition}
\newtheorem{question}[theorem]{Question}
\theoremstyle{definition}
\newtheorem{remark}[theorem]{Remark}
\theoremstyle{remark}
 \def\Z{{\mathbb{Z}}}
 \def\C{{\mathbb{C}}}
\def\mod{{\rm Mod}}
\def\GL{{\rm GL}}
\def\Aut{{\rm Aut}}
\def\Out{{\rm Out}}
\def\Inn{{\rm Inn}}
 \def\Sp{{\rm Sp}}
 \def\Im{{\rm Im}}
 \def\PSp{{\rm PSp}}
\begin{document}

\newenvironment{prooff}{\medskip \par \noindent {\it Proof}\ }{\hfill
$\square$ \medskip \par}
    \def\sqr#1#2{{\vcenter{\hrule height.#2pt
        \hbox{\vrule width.#2pt height#1pt \kern#1pt
            \vrule width.#2pt}\hrule height.#2pt}}}
    \def\square{\mathchoice\sqr67\sqr67\sqr{2.1}6\sqr{1.5}6}
\def\pf#1{\medskip \par \noindent {\it #1.}\ }
\def\endpf{\hfill $\square$ \medskip \par}
\def\demo#1{\medskip \par \noindent {\it #1.}\ }
\def\enddemo{\medskip \par}
\def\qed{~\hfill$\square$}

 \title[Linear Representations of mapping class group]
{Low-dimensional linear representations of mapping class groups}

\author[Mustafa Korkmaz]{Mustafa Korkmaz}

 \address{Department of Mathematics, Middle East Technical University,
  Ankara, Turkey, and}
 \address{Max-Planck Institut f\"ur Mathematik, Bonn, Germany}
 \email{korkmaz@metu.edu.tr}

 \date{\today}

\begin{abstract}
Recently, John Franks and Michael Handel proved that, for $g\geq 3$ and $n\leq 2g-4$,
every homomorphism from the mapping class group of an orientable surface of genus $g$
to $\GL (n,\C)$ is trivial. We extend this result to $n\leq
2g-1$, also covering the case $g=2$. As an application, we prove the
corresponding result for nonorientable surfaces. Another application is
on the triviality of homomorphisms from the
mapping class group of a closed surface of genus $g$ to $\Aut (F_n)$ or to $\Out (F_n)$
for  $n\leq 2g-1$.
\end{abstract}

 \maketitle

  \setcounter{secnumdepth}{1}
 \setcounter{section}{0}

\section{Introduction}
Let $S$ be a compact connected oriented surface of genus $g\geq 1$ with $q\geq 0$ boundary components and
with $p\geq 0$ marked points in the interior. Let $\mod (S)$ denote the mapping class group of $S$, the group of
isotopy classes of orientation--preserving self--diffeomorphisms of $S$. Diffeomorphisms and
isotopies are assumed to be the identity on the marked points and on the boundary.

There is the classical representation of $\mod (S)$ onto the symplectic group Sp$(2g,\Z)$
induced by the action of the mapping class group on the first homology of the closed surface
of genus $g$. In~\cite{fh}, Question~1.2,
Franks and Handel asked whether every homomorphism $\mod (S)\to \GL (n,\C)$ is trivial
for $n\leq 2g-1$. They proved that, in fact, this is the case for $g\geq 3$ and $n\leq 2g-4$,
improving a result of Funar~\cite{funar} who showed that every homomorphism from
the mapping class group to ${\rm SL} (n,\C)$ has finite image for $n\leq   \sqrt{g+1}$.

The aim of this paper is to give a complete answer to the question of Franks and Handel,
and then consider the corresponding problem for nonorientable surfaces.
We first prove the following theorem.

\begin{Theorem}  \label{thm:1}
  Let $g\geq 1$ and let $n\leq 2g-1$. Let $\phi:\mod (S)\to \GL (n,\C)$ be a homomorphism. Then
  the image ${\rm Im}(\phi)$ of $\phi$ is
  \begin{enumerate}
    \item    trivial if $g\geq 3$,
    \item   a quotient of the cyclic group $\Z_{10}$ of order $10$ if $g=2$, and
    \item   a quotient of $\Z_{12}$ if $(g,q)=(1,0)$ and of $\Z^{q}$ if $g=1$ and $q\geq 1$.
  \end{enumerate}
\end{Theorem}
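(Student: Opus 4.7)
The plan is to analyze the image $A:=\phi(t_a)$ of a single non-separating Dehn twist, exploiting the fact that $\mod(S)$ is generated by non-separating Dehn twists and that any two of them are conjugate in $\mod(S)$. In particular, if I can show that $A$ is a scalar matrix, then $\Im(\phi)$ is abelian and $\phi$ factors through the abelianization $\mod(S)^{\mathrm{ab}}$; feeding in the known computations of this abelianization ($0$ for $g\geq 3$, $\Z_{10}$ for $g=2$, $\Z_{12}$ for the closed torus, and $\Z^q$ for $g=1$ with $q\geq 1$ boundary components) then yields the three cases of the theorem.

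The reduction of $A$ to a scalar uses three classical families of relations in $\mod(S)$. First, the \emph{lantern relation}, applied on a four-holed sphere embedded in $S$ whose seven associated curves are all non-separating in $S$ (possible for $g\geq 3$), forces $\det(A)^{4}=\det(A)^{3}$ after taking determinants of the resulting matrix identity, hence $\det(A)=1$. Second, pairwise disjoint non-separating curves from a pants decomposition of $S$ produce pairwise commuting conjugates of $A$ in $\GL(n,\C)$; since a closed genus-$g$ surface admits $2g-1$ such pairwise disjoint non-separating curves, one obtains $2g-1$ simultaneously triangularizable conjugates of $A$. Third, a chain of non-separating curves $c_{1},\ldots,c_{2g+1}$ produces conjugates $A_{1},\ldots,A_{2g+1}$ of $A$ satisfying the braid relations $A_{i}A_{i+1}A_{i}=A_{i+1}A_{i}A_{i+1}$ together with the commuting relations $A_{i}A_{j}=A_{j}A_{i}$ for $|i-j|\geq 2$. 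The key linear-algebra fact I expect to extract is that a matrix $A\in\GL(n,\C)$ admitting such a configuration of conjugates must be a scalar whenever $n\leq 2g-1$: the commuting conjugates bound the number of distinct eigenvalues of $A$ from above, the braid-type constraints among conjugates force those eigenvalues to coincide, and the lantern identity pins down their common value.

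The principal obstacle is precisely this scalar-reduction step, and it is here that the tighter hypothesis $n\leq 2g-1$ must be exploited in full (the Franks--Handel bound $n\leq 2g-4$ corresponds to a weaker use of the same relations). I expect the proof to quantify carefully how many pairwise commuting conjugates of $A$ are supplied by a suitably chosen decomposition of $S$, and to combine this count with the braid relations to eliminate all Jordan-form freedom in the remaining dimensions. Once $A$ is scalar the image is abelian; for $g\geq 3$ the lantern and chain relations leave only $A=I$, for $g=2$ a hyperelliptic-type chain relation restricts the scalar to a tenth root of unity, and for $g=1$ the corresponding relations collapse the image onto a quotient of $\Z_{12}$ (closed) or onto the boundary-twist subgroup $\Z^{q}$ (with $q\geq 1$ boundary components), matching the three conclusions of the theorem.
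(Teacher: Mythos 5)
Your central step --- reducing $A=\phi(t_a)$ to a scalar matrix --- is not only left unproven but is actually false in the case $g=2$, $n=3$. Compose the abelianization map $\mod (S)\to H_1(\mod (S);\Z)\cong\Z_{10}$ with the diagonal representation sending a generator to $\mathrm{diag}(\zeta,\zeta^3,\zeta^7)$, where $\zeta=e^{2\pi i/10}$: this is a homomorphism $\mod (S)\to\GL(3,\C)$ fully consistent with the theorem, yet $\phi(t_a)$ is a non-scalar matrix of order $10$. Every relation you propose to exploit (lantern, disjointness, braid) is of course satisfied by the images here, since the image is abelian, so no configuration of conjugates satisfying those relations can force $A$ to be scalar under the hypothesis $n\leq 2g-1$ alone; the ``key linear-algebra fact'' you expect to extract is false as stated. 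What the theorem really requires is that the image of the \emph{commutator subgroup} be trivial, which for $g=2$ is strictly weaker than scalarity of $A$. The supporting assertions are also not substantiated: having $2g-1$ pairwise commuting conjugates of $A$ does not by itself bound the number of distinct eigenvalues of $A$ (that number is at most $n=2g-1$ trivially), and the braid relations among conjugates $A_i$ do not force the eigenvalues of a single $A_i$ to coincide.

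The paper's mechanism is genuinely different and you would need its ingredients. One shows that for two nonseparating curves $b,c$ meeting once, $\phi(t_bt_c^{-1})=I$; since the commutator subgroup $G'$ of $G=\mod (S)$ is \emph{normally generated} by $t_bt_c^{-1}$ (Theorem~\ref{thm:G'}), this kills $\phi(G')$, and the computation of $H_1(\mod (S);\Z)$ (Theorem~\ref{thm:H_1}) finishes. Obtaining $\phi(t_bt_c^{-1})=I$ for $(g,n)=(2,3)$ requires a case analysis of the Jordan form of $L_a$, using that eigenspaces of $L_a$ are invariant under twists about disjoint curves, the braid-relation rigidity of Lemma~\ref{lem:matrix2}, and --- in the non-diagonalizable cases --- the solvability of the upper-triangular subgroup combined with the perfectness of $G'$ (Theorem~\ref{thm:G'prf}). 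For $g\geq3$ one then inducts on genus: a genus $g-1$ subsurface $R$ yields a $\phi(\mod (R))$-invariant subspace of intermediate dimension, and the inductive hypothesis applied to the two induced representations forces $\phi(G)$ to be trivial. None of the lantern-relation, pants-decomposition, or chain-counting devices in your outline substitutes for these steps.
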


I was informed by Bridson~\cite{b2} that he conjectured the above theorem. He hints this in the page $2$ of~\cite{b}.

The first corollary to Theorem~\ref{thm:1} is the following.

\begin{Corollary}
Let $g\geq 2$, $n\leq 2g-1$, $\Gamma$ be a quotient of $\mod (S)$
and let $\varphi:\Gamma\to \GL (n,\C)$ be a homomorphism. Then $\Im (\varphi)$
is trivial if $g\geq 3$, and  is isomorphic to a quotient of $\Z_{10}$ if $g=2$.
\end{Corollary}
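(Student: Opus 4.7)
The plan is to obtain the corollary as an essentially immediate consequence of Theorem~\ref{thm:1} by precomposing with the quotient map. First I would unpack the hypothesis: saying that $\Gamma$ is a quotient of $\mod(S)$ means there is a surjective homomorphism $\pi:\mod(S)\to\Gamma$. Given the homomorphism $\varphi:\Gamma\to\GL(n,\C)$, form the composition
\[
\phi \;=\; \varphi\circ\pi\;:\;\mod(S)\longrightarrow\GL(n,\C).
\]
Since $g\geq 2$ and $n\leq 2g-1$, Theorem~\ref{thm:1} applies directly to $\phi$.

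In case $g\geq 3$, Theorem~\ref{thm:1}(1) gives that $\Im(\phi)$ is trivial. Because $\pi$ is surjective, every element of $\Gamma$ lies in the image of $\pi$, hence $\varphi$ sends every element of $\Gamma$ to $\phi$ of a preimage, which is the identity. Thus $\Im(\varphi)$ is trivial. In case $g=2$, Theorem~\ref{thm:1}(2) gives that $\Im(\phi)$ is a quotient of $\Z_{10}$. Again using surjectivity of $\pi$, one has the set-theoretic equality $\Im(\varphi)=\Im(\varphi\circ\pi)=\Im(\phi)$, so $\Im(\varphi)$ is a quotient of $\Z_{10}$.

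There is no real obstacle here; the only point to verify is the elementary observation that for a surjective $\pi$ the image of $\varphi$ equals the image of $\varphi\circ\pi$, which is what lets the conclusion of Theorem~\ref{thm:1} transfer from $\mod(S)$ to its arbitrary quotient $\Gamma$. The statement for $g=1$ is omitted from the corollary, consistent with Theorem~\ref{thm:1}(3) where the image depends on the number of boundary components, so no additional argument is required in that range.
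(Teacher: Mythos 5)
Your proof is correct and is exactly the intended argument: precompose $\varphi$ with the quotient map $\pi$, apply Theorem~\ref{thm:1} to $\varphi\circ\pi$, and use surjectivity of $\pi$ to identify $\Im(\varphi)$ with $\Im(\varphi\circ\pi)$. The paper treats this corollary as an immediate consequence and gives no separate proof, so your write-up matches its approach.
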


Note that the groups $\Sp(2g,\Z)$, $\Sp(2g,\Z_m)$, $\PSp(2g,\Z)$, and $\PSp(2g,\Z_m)$ are quotients of $\mod (S)$.
Since $\mod (S)$ is residually finite~\cite{gros,iv1}, there are many finite quotients.

In the definition of the mapping class group, if we allow the diffeomorphisms of $S$ to permute the
marked points,
then we get a group $\mathcal{M} (S)$, which contains $\mod (S)$ as a subgroup of index $p!$.

\begin{Corollary}
  Let $g\geq 2$ and let $n\leq 2g-1$. Let $\phi:\mathcal{M} (S)\to \GL (n,\C)$ be a homomorphism. Then
  ${\rm Im}(\phi)$ is finite.
\end{Corollary}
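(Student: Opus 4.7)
The plan is straightforward: exploit the fact that $\mod(S)$ sits inside $\mathcal{M}(S)$ as a finite-index subgroup (of index $p!$) and invoke Theorem~\ref{thm:1} on the restriction. Concretely, I would restrict $\phi$ to $\mod(S)$ and consider the homomorphism $\phi|_{\mod(S)}:\mod(S)\to \GL(n,\C)$. Since $g\geq 2$ and $n\leq 2g-1$, Theorem~\ref{thm:1} applies and tells me that $\phi(\mod(S))$ is finite: it is trivial when $g\geq 3$ and a quotient of $\Z_{10}$ when $g=2$. Note that the hypothesis $g\geq 2$ is exactly what is needed to avoid case (3) of Theorem~\ref{thm:1}, where the image can be as large as $\Z^{q}$.

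Next, I would pass from a finite image on $\mod(S)$ to a finite image on all of $\mathcal{M}(S)$ by a standard finite-index argument. Let $K=\ker(\phi)$. Then $\phi(\mathcal{M}(S))\cong \mathcal{M}(S)/K$ contains the subgroup $\mod(S)\cdot K/K\cong \mod(S)/(\mod(S)\cap K)$, which equals $\phi(\mod(S))$ and is finite by the previous step. The index satisfies
\[
[\mathcal{M}(S)/K : \mod(S)\cdot K/K]\;=\;[\mathcal{M}(S):\mod(S)\cdot K]\;\leq\;[\mathcal{M}(S):\mod(S)]\;=\;p!,
\]
so $\phi(\mathcal{M}(S))$ is a finite extension of a finite group and therefore finite.

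There is essentially no obstacle once Theorem~\ref{thm:1} is in hand; the only point that requires a moment of care is making sure that the index $[\mathcal{M}(S):\mod(S)]=p!$ really is finite (which forces the marked-point count $p$ to be finite, but this is built into the setup of the paper). The argument also clarifies why the statement excludes $g=1$: in that range $\phi(\mod(S))$ can already be infinite by part~(3) of Theorem~\ref{thm:1}, and no finite-index argument can rescue finiteness of $\phi(\mathcal{M}(S))$.
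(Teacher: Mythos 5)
Your proof is correct and is exactly the standard finite-index argument the paper intends (the paper states this corollary without proof, relying on the fact that $\mod(S)$ has index $p!$ in $\mathcal{M}(S)$ and applying Theorem~\ref{thm:1} to the restriction). Nothing further is needed.
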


For a nonorientable surface $N$ of genus $g$ with $p \geq 0$ marked points, we define the mapping class group
$\mod (N)$ of $N$ to be the group of isotopy classes of diffeomorphisms preserving the set of
marked points
(isotopies are assumed to fix the marked points). The action of a mapping class on the
first homology of the closed surface ${\overline N}$  obtained by forgetting the marked points
give rise to an automorphism of $H_1({\overline N};\Z)$ preserving the associated
$\Z_2$--valued intersection form.
It was proved by McCarthy and Pinkall~\cite{mp}, and also by Gadgil and Pancholi~\cite{gp}, that,
in fact, all automorphisms of $H_1({\overline N};\Z)$ preserving the $\Z_2$--valued intersection form
are induced by diffeomorphisms. By dividing out the torsion subgroup of $H_1 ({\overline N};\Z)$, we get a
representation $\mod (N)\to \GL (g-1,\C)$. It is now natural to ask the triviality of
the lower dimensional representations of $\mod (N)$.
Since the mapping class group $\mod (N)$ has nontrivial first homology, we cannot expect that every
such homomorphism is trivial. Instead, one may ask the following question.

\begin{question}
  Let $g\geq 3$, and let $n\leq g-2$. Is the image of every homomorphism
  $\phi:\mod (N)\to \GL (n,\C)$ finite?
\end{question}

As an application of Theorem~\ref{thm:1}, we answer this question leaving only one case open;
the case $g$ is even and $n=g-2$.

\begin{Theorem}  \label{thm:2}
  Let $g\geq 3$, and let $n\leq g-2$ if $g$ is odd and $n\leq g-3$ if $g$ is even. Let $N$ be a nonorientable
  surface of genus $g$ with $p \geq 0$ marked points and let $\phi:\mod (N)\to \GL (n,\C)$ be a homomorphism. Then
  ${\rm Im}(\phi)$ is finite.
\end{Theorem}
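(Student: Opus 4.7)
\medskip
\noindent\textbf{Proof plan for Theorem~\ref{thm:2}.} The idea is to reduce to Theorem~\ref{thm:1} by embedding a maximal orientable subsurface in $N$, and then to promote the resulting finiteness across $\mod(N)$ by conjugacy and a finite generating set.

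I write $N=\Sigma\cup R$, where $\Sigma\subset N$ is a compact orientable subsurface of genus $h=\lfloor(g-1)/2\rfloor$ with a single boundary component, chosen disjoint from the marked points, and $R$ is a M\"obius band when $g$ is odd and a Klein bottle with an open disk removed when $g$ is even. An Euler characteristic check confirms this decomposition, and the hypothesis on $n$ rewrites as $n\leq 2h-1$ in both parities: for $g=2h+1$ it becomes $n\leq g-2$, and for $g=2h+2$ it becomes $n\leq g-3$. Inclusion followed by extension by the identity defines a homomorphism $\iota:\mod(\Sigma)\to\mod(N)$. Theorem~\ref{thm:1} applied to $\phi\circ\iota$ then shows $\Im(\phi\circ\iota)$ is finite: trivial for $h\geq 3$ and a quotient of $\Z_{10}$ for $h=2$. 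The residual case $h=1$, which only arises for $(g,n)\in\{(3,1),(4,1)\}$, is handled by noting that $\GL(1,\C)=\C^*$ is abelian, so $\phi$ factors through $H_1(\mod(N);\Z)$, known to be finite for these surfaces.

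Next I spread this information across $\mod(N)$. By the change-of-coordinates principle, every two-sided nonseparating simple closed curve of $N$ with nonorientable complement lies in the $\mod(N)$-orbit of a nonseparating curve in $\Sigma$; hence the Dehn twist $T_c$ along any such $c$ has $\phi$-image conjugate into the finite group $\Im(\phi\circ\iota)$. When $g$ is even, the second $\mod(N)$-orbit of two-sided nonseparating curves (those with orientable complement) is treated by a parallel decomposition $N=\Sigma'\cup R'$ in which $\Sigma'$ is orientable of genus $h-1$ with two boundary components; Theorem~\ref{thm:1} applies there as well. Dehn twists along two-sided separating curves are products of nonseparating twists via lantern-type relations on $\Sigma$. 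For any crosscap slide $Y$ supported in a Klein-bottle neighborhood $B\subset N$, the classical relation $Y^2=T_{\partial B}$ forces $\phi(Y)^2$, and hence $\phi(Y)$, to be of finite order.

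To conclude, Chillingworth's generation theorem provides a finite generating set for $\mod(N)$ consisting of Dehn twists along two-sided simple closed curves together with a single crosscap slide, and by a theorem of Lickorish the twist subgroup $T(N)\leq\mod(N)$ has index two. Arranging the generating set so that all its twist generators lie inside $\iota(\mod(\Sigma))$ (or inside one of the analogous subsurfaces of the previous paragraph) confines the corresponding $\phi$-images to a common finite subgroup of $\GL(n,\C)$; the index-two inclusion $T(N)\leq\mod(N)$ then makes $\Im(\phi)$ itself finite. The main obstacle is precisely this organizational step: promoting ``each generator has torsion $\phi$-image'' to ``$\Im(\phi)$ is finite'' is in general false in $\GL(n,\C)$, and it succeeds here only because the finiteness statement of Theorem~\ref{thm:1} on a single orientable subsurface can be leveraged to place all twist generators simultaneously inside one fixed finite subgroup, rather than merely of finite order individually.
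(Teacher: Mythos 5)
Your first step coincides with the paper's: embed an orientable subsurface $\Sigma$ of genus $h=\lfloor (g-1)/2\rfloor$ with one boundary component (the complement being a M\"obius band or a punctured Klein bottle carrying the marked points), check $n\leq 2h-1$, and apply Theorem~\ref{thm:1} to $\phi\circ\iota$, with the $h=1$ cases handled through $H_1(\mod(N);\Z)$. The problem is the second half. The step ``arranging the generating set so that all its twist generators lie inside $\iota(\mod(\Sigma))$'' cannot be carried out: the twist subgroup $T$ of $\mod(N)$ is \emph{not} generated by twists about curves isotopic into a single subsurface $\Sigma$ of that genus (the subgroup those twists generate is just $\iota(\mod(\Sigma))$, which has infinite index in $T$). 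The generators of $T$ are merely \emph{conjugate}, by elements of $\mod(N)$ lying outside $\iota(\mod(\Sigma))$, to twists supported in $\Sigma$; so their $\phi$-images lie in various distinct conjugates of the finite group $\Im(\phi\circ\iota)$, and, as you yourself observe, a subgroup of $\GL(n,\C)$ generated by such elements need not be finite. Your proposal identifies the obstacle but does not actually overcome it. (A secondary error: the twist subgroup has index $2$ in $\mod(N)$ only for closed $N$; with $p$ marked points the index is $p!\cdot 2^{p+1}$. This is harmless since it is still finite.)

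The missing ideas are the ones the paper uses to close exactly this gap. For $h\geq 3$ the group $\Im(\phi\circ\iota)$ is not just finite but \emph{trivial}, and triviality is conjugation-invariant; hence $\phi$ kills every twist about a two-sided nonseparating curve with nonorientable complement, and since these generate $T$, one gets $\phi(T)=\{I\}$ outright --- no need to organize generators into one subsurface, and no need to discuss separating twists or crosscap slides, since $[\mod(N):T]$ is finite. For $h=2$ ($g=5,6$) triviality fails ($\Im(\phi\circ\iota)$ is a quotient of $\Z_{10}$) and your argument genuinely breaks down: conjugates of a fixed element of order $10$ can generate an infinite group. Here the paper shows that any two such twists $t_x,t_y$ with $x,y$ meeting once satisfy $\phi(t_x)=\phi(t_y)$ (by conjugating the pair into $\Sigma$, where the image is abelian), and then uses the connectivity result of \cite{k02gd} (any two essential curves are joined by a chain of essential curves meeting consecutively in one point) to conclude that \emph{all} essential twists have the \emph{same} image, so $\phi(T)$ is cyclic; Stukow's theorem $[T:T']=2$ then bounds $|\phi(T)|\leq 2$. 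Without these two mechanisms --- conjugation-invariance of triviality in the large-genus case, and equality (not just conjugacy) of the images of all essential twists in the small-genus case --- the passage from Theorem~\ref{thm:1} to finiteness of $\Im(\phi)$ does not go through.
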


The mapping class group of a nonorientable surface with boundary components
may also be considered, but we restrict ourself to surfaces with marked points
only in order to make the proof simpler.

As another application of Theorem~\ref{thm:1}, we prove the following result
on the homomorphisms from the mapping class group of a closed orientable surface
to $\Aut (F_n)$ and to $\Out (F_n)$, where $F_n$ is the free group of rank $n$.
Compare Theorem~\ref{thm:autF-n} with Question~$16$ in~\cite{b-v}.

\begin{Theorem}  \label{thm:autF-n}
Let $g\geq 2$ and let $S$ be a closed orientable surface of genus $g$. Let $n$ be a positive integer with
$n\leq 2g-1$. Let $H$ denote either of $\Aut (F_n)$ or $\Out (F_n)$ and let
$\varphi:\mod (S)\to H$ be a homomorphism. Then the image of $\varphi$ is
  \begin{enumerate}
    \item    trivial if $g\geq 3$, and
    \item   a quotient of  $\Z_{10}$ if $g=2$.
\end{enumerate}
\end{Theorem}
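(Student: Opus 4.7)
The plan is to reduce Theorem~\ref{thm:autF-n} to Theorem~\ref{thm:1} via the abelianization representation of $H$. Let $\rho \colon H \to \GL(n,\Z)$ be the natural homomorphism induced by the action of automorphisms of $F_n$ on $H_1(F_n;\Z) \cong \Z^n$; inner automorphisms act trivially on homology, so $\rho$ is well defined for $H = \Out(F_n)$ as well as for $H = \Aut(F_n)$. The composition $\rho \circ \varphi \colon \mod(S) \to \GL(n,\C)$ is then a homomorphism to which Theorem~\ref{thm:1} applies: its image is trivial for $g \geq 3$ and a quotient of $\Z_{10}$ for $g = 2$.

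The second key ingredient is that $K_H := \ker \rho$ is torsion-free. For $H = \Aut(F_n)$ this kernel is the Torelli group $IA_n$, known to be torsion-free by a classical argument of Baumslag--Taylor. For $H = \Out(F_n)$ the analogous Torelli subgroup is also torsion-free: if $\phi \in \Out(F_n)$ has finite order, Nielsen realization (Culler, Khramtsov, Zimmermann) represents $\phi$ by an automorphism of a graph $G$ with $\pi_1(G) \cong F_n$, which acts non-trivially on $H_1(G;\Z)$ unless $\phi$ itself is trivial.

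For $g \geq 3$, Theorem~\ref{thm:1} gives $\rho \circ \varphi = 1$, so $\varphi(\mod(S)) \subset K_H$. Since $\mod(S)$ for a closed surface of genus $g \geq 3$ is generated by finitely many involutions (Brendle--Farb, Kassabov, Korkmaz), and each involution must map to the identity in the torsion-free group $K_H$, we conclude that $\varphi$ is trivial. This establishes part~(1).

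For $g = 2$ the argument is more subtle, and I expect this to be the main obstacle. Writing $\mod(S_2)$ as generated by finitely many involutions $r_1,\dots,r_k$, the images $\varphi(r_i)$ have order dividing $2$, hence $\rho\varphi(r_i) \in \Z_{10}$ has order dividing $2$. All such elements lie in the unique subgroup $\Z_2 \subset \Z_{10}$, so $\Im(\rho \circ \varphi)$ is a quotient of $\Z_2$. It remains to show that $\Im(\varphi) \cap K_H$ is trivial, which would give $\Im(\varphi) \cong \Im(\rho \circ \varphi)$, a quotient of $\Z_{10}$. Torsion-freeness of $K_H$ alone is not enough, because the kernel of $\rho \circ \varphi$ in $\mod(S_2)$ contains the commutator subgroup, which is not manifestly generated by torsion. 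The plan is to exploit the centrality of the hyperelliptic involution together with the braid and chain relations of the explicit finite presentation of $\mod(S_2)$ to pin down the images of the involutive generators, forcing $\Im(\varphi)$ to be cyclic of order dividing $10$.
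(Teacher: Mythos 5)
Your reduction via the homology representation $\rho\colon H\to\GL(n,\Z)$ and the use of Theorem~\ref{thm:1} plus torsion-freeness of $\ker\rho$ is exactly the paper's strategy, and your part~(1) argument ($g\geq 3$: image lands in a torsion-free group but the source is generated by torsion, hence trivial) is complete and matches the paper. The problem is part~(2), where you explicitly stop at a plan rather than a proof. What you need to show is that $\varphi$ kills the commutator subgroup $N$ of $\mod(S)$, and, as you correctly observe, torsion-freeness of $K_H$ does not do this directly because $N$ is not obviously generated by torsion. Your proposed workaround (computing that $\Im(\rho\circ\varphi)$ is a quotient of $\Z_2$ and then trying to ``pin down'' the images of involutive generators via the presentation) does not close the gap: showing $\Im(\varphi)\cap K_H$ is trivial is precisely the hard point, and nothing in your sketch forces an element of $N$ that is \emph{not} torsion to die in $K_H$.

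The missing idea is to find a \emph{single torsion element inside $N$ whose normal closure in $\mod(S)$ is all of $N$}. Concretely, take a chain $c_1,\dots,c_5$ of nonseparating curves and set $\sigma=t_1t_2t_3t_4t_5$, which has order $6$; then $\sigma^2$ has order $3$ and, being a product of ten Dehn twists about nonseparating curves (each of which maps to the generator of $H_1(\mod(S);\Z)\cong\Z_{10}$), lies in $N$. One then proves (this is Lemma~\ref{lem:sigma2} of the paper) that the normal closure of $\sigma^2$ equals $N$, using the conjugation relations $\sigma t_i\sigma^{-1}=t_{i+1}$ together with the braid and commutation relations to show the quotient by that normal closure is cyclic. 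Once this is in hand, the argument finishes as in part~(1): Theorem~\ref{thm:1} puts $\varphi(N)$ inside the torsion-free group $K_H$, the torsion element $\sigma^2\in N$ must therefore map to $1$, and normal generation gives $\varphi(N)=\{1\}$, so $\varphi$ factors through $H_1(\mod(S);\Z)\cong\Z_{10}$. Without this (or an equivalent) normal-generation-by-torsion statement, your $g=2$ case is not proved. (Your side remarks are fine: the torsion-freeness of the Torelli subgroup of $\Out(F_n)$ is a known fact, cited in the paper to Baumslag--Taylor, though your Nielsen-realization sketch would itself need care.)
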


In~\cite{fh}, the main theorem, Theorem~1.1, is proved by induction on $g$. It is first proved for the cases
$g\geq 3$ and $n\leq 2$. The main improvement of this paper is that we can start the induction
from the cases $g=2$ and $n\leq 3$. The rest of the proof of Theorem~\ref{thm:1} follows from
the arguments of~\cite{fh}. We give a slight modification of this proof.
The proofs in cases $g=2$ and $n\leq 2$ follow from the proof of Lemma~3.1
of~\cite{fh} with the additional information that the commutator subgroup of $\mod (S)$ is
perfect. In the case $(g,n)=(2,3)$ we need to treat all possible Jordan forms
of the image of the Dehn twist about a nonseparating simple closed curve.

\medskip

\noindent
{\bf Acknowledgments.}
This paper was written while I was visiting the Max-Planck Institut f\"ur Mathematik in Bonn.
I thank MPIM for its generous support and wonderful research environment.
After the completion of the first version of this work, I was informed by John Franks and Michael Handel that they also
improved Theorem~1.1 in~\cite{fh} to the cases $g\geq 2$ and $n \leq 2g-1$. I would like to thank them
for sending the new version of their paper, which now appears on Arxiv as version 3. I also thank Martin Bridson for his
interest in this work.


\section{Algebraic preliminaries}

We state two properties of subgroups of $\GL (n,\C)$. They are either well-known, or easy to prove.
Therefore, we do not prove them. These properties will be used in the proof of Theorem~\ref{thm:1}.

\begin{lemma}  \label{lem:matrix2}
Let $C= \left(
         \begin{array}{ccc}
             z & * & * \\
             0 & z & * \\
             0 & 0 & z \\
        \end{array} \right)$
 and $D=\left(
         \begin{array}{ccc}
             w & * & * \\
             0 & w & * \\
             0 & 0 & w \\
        \end{array} \right) $
be two elements of $\GL (3,\C)$. Then $CDC=DCD$ if and only if $C=D$.
\end{lemma}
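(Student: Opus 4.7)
The plan is to write $C = zI + N$ and $D = wI + M$, where $N$ and $M$ are strictly upper triangular $3\times 3$ matrices, and then to read off the equation $CDC = DCD$ one entry at a time, using the very restrictive nilpotent calculus available in dimension $3$. The key point is that any product of three strictly upper triangular $3\times 3$ matrices vanishes.

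First I would compare the diagonals. Every summand in the expansion of $CDC$ that contains at least one factor of $N$ or $M$ is strictly upper triangular, so the diagonal of $CDC$ is $z^{2}w$ on each diagonal entry, while the diagonal of $DCD$ is $zw^{2}$. Since $C,D \in \GL(3,\C)$ one has $z^{3} = \det(C) \neq 0$ and $w^{3} = \det(D) \neq 0$, hence $z$ and $w$ are nonzero; equating diagonals then forces $z = w$. Write $a$ for this common value.

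With $z = w = a$, a direct expansion gives
$$CDC - DCD \;=\; a^{2}(N - M) + a(N^{2} - M^{2}) + (NMN - MNM),$$
since the terms involving $MN + NM$ occur with equal coefficient in the two products once the scalar parts agree and therefore cancel. Because $N$ and $M$ are strictly upper triangular in dimension $3$, every triple product of them vanishes, so $NMN = MNM = 0$, and the equation collapses to $a(N - M) + (N^{2} - M^{2}) = 0$.

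Finally, the matrices $N^{2}$ and $M^{2}$ have their only possibly nonzero entry in position $(1,3)$. Hence the $(1,2)$ and $(2,3)$ entries of the simplified equation read $a(N - M)_{1,2} = 0$ and $a(N - M)_{2,3} = 0$; since $a \neq 0$, the $(1,2)$ and $(2,3)$ entries of $N$ and $M$ coincide. This forces $N^{2}$ and $M^{2}$ to agree at $(1,3)$, so the $(1,3)$ entry of the equation becomes $a(N - M)_{1,3} = 0$ and gives equality there as well. Thus $N = M$ and $C = D$. The converse is immediate. There is no substantive obstacle: the argument is pure $3\times 3$ nilpotent bookkeeping, and the only place one must pause is to invoke the determinants of $C$ and $D$ at the diagonal step to rule out $a = 0$.
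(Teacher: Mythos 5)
Your proof is correct and complete. The paper actually states Lemma~\ref{lem:matrix2} without proof (it is dismissed as ``well-known, or easy to prove''), so there is nothing to compare against; your computation --- splitting off the scalar parts, using invertibility to force $z=w$, killing the triple products $NMN$ and $MNM$ by nilpotency, and then reading off the $(1,2)$, $(2,3)$, and finally $(1,3)$ entries --- is exactly the kind of bookkeeping the author had in mind, and you correctly identify that invertibility is genuinely needed (for $z=w=a\neq 0$ the statement would fail).
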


\begin{lemma}  \label{lem:uptrglr}
The subgroup of $\GL (n,\C)$ consisting of upper triangular matrices is solvable.
\end{lemma}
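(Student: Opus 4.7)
The plan is to realise $T$, the subgroup of $\GL(n,\C)$ of upper triangular matrices, as a solvable extension. First I would consider the homomorphism $\pi\colon T\to (\C^{*})^{n}$ that sends each matrix to the ordered $n$-tuple of its diagonal entries. Since diagonal entries of a product of upper triangular matrices are the products of the corresponding entries, $\pi$ is a surjective group homomorphism onto the abelian group $(\C^{*})^{n}$, with kernel the subgroup $U$ of unipotent upper triangular matrices. Consequently $[T,T]\subseteq U$, and it suffices to prove that $U$ is solvable.

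For $U$ the natural approach is a filtration by distance from the diagonal. For each integer $k\geq 0$, let $U_{k}$ be the subgroup of those $I+N\in U$ satisfying $N_{ij}=0$ whenever $j-i\leq k$. Then $U_{0}=U$ and $U_{n-1}=\{I\}$. The key step is the containment
\[
[U_{i},\,U_{j}]\ \subseteq\ U_{i+j+1}.
\]
This is a direct matrix calculation: writing $A=I+M\in U_{i}$ and $B=I+N\in U_{j}$, the inverses $A^{-1}=I-M+M^{2}-\cdots$ and $B^{-1}=I-N+N^{2}-\cdots$ are well-defined by nilpotence, and each nonzero entry of $ABA^{-1}B^{-1}-I$ lies at least $i+j+1$ positions above the main diagonal, because any product of the strictly upper triangular ``shift-type'' matrices built from $M$ and $N$ can only push support further up and to the right.

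Granting this containment, an easy induction on the derived series of $U$ gives $U^{(k)}\subseteq U_{2^{k}-1}$, which equals $\{I\}$ as soon as $2^{k}>n$. Hence $U$ is solvable (in fact nilpotent of class at most $n-1$), and combined with $[T,T]\subseteq U$ we conclude that $T$ itself is solvable. The one step that requires actual checking is the commutator bound $[U_{i},U_{j}]\subseteq U_{i+j+1}$; everything else is formal, and this is precisely why Korkmaz omits the proof as well known.
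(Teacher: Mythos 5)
Your argument is correct and is the standard proof of this fact, which the paper deliberately omits as well known: the diagonal map $T\to(\C^{*})^{n}$ has abelian image and unipotent kernel $U$, and the superdiagonal filtration with $[U_{i},U_{j}]\subseteq U_{i+j+1}$ kills the derived (indeed the lower central) series of $U$ in at most $n-1$ steps. Nothing is missing; the one computation you flag, the commutator bound, follows from writing $ABA^{-1}B^{-1}-I=(MN-NM)A^{-1}B^{-1}$ and noting that multiplication by unipotent matrices preserves the support condition.
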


We will also require the following lemma from~\cite{fh}.

\begin{lemma} \label{prop:solv} $($\cite{fh}, {\rm Lemma} $2.2)$
  Let $G$ be a perfect group, and $H$ a solvable group. Then any homomorphism $G\to H$ is trivial.
\end{lemma}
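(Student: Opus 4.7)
The plan is to exploit the tension between being perfect (no room for nontrivial abelianizations at any stage) and being solvable (eventually forced to become trivial via iterated commutators). The key observation is that the image of a perfect group under any homomorphism is again perfect, so it suffices to show that a perfect subgroup of a solvable group must be trivial.

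First I would record the two relevant facts. Recall that $G$ perfect means $G = [G,G]$, and in particular $G^{(k)} = G$ for every $k \geq 0$, where $G^{(k)}$ denotes the $k$-th term of the derived series. On the other hand, $H$ solvable means there exists some $n \geq 0$ with $H^{(n)} = \{1\}$. For any homomorphism $\phi : G \to H$, one has the naturality of the derived series, namely $\phi(G^{(k)}) \subseteq H^{(k)}$ for all $k$.

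The proof then collapses immediately: apply this with $k = n$ to obtain
\[
\phi(G) \;=\; \phi(G^{(n)}) \;\subseteq\; H^{(n)} \;=\; \{1\},
\]
so $\phi$ is trivial. The only real step is the containment $\phi(G^{(k)}) \subseteq H^{(k)}$, which follows by induction on $k$ from the fact that a homomorphism sends commutators to commutators, i.e.\ $\phi([x,y]) = [\phi(x),\phi(y)]$.

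There is no genuine obstacle here; the result is entirely formal. The only thing one must be careful about is not to mistake ``perfect'' for ``simple'' or to require $\phi$ to be surjective — neither is needed, since we use perfection of $G$ together with the derived-series compatibility, not any structural hypothesis on the image. This is precisely why the lemma is so useful in the sequel: to kill a representation it suffices to restrict to a perfect subgroup and exhibit a solvable target (for example, upper triangular matrices as in Lemma~\ref{lem:uptrglr}).
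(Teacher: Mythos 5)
Your proof is correct and is the standard argument: the derived series is natural under homomorphisms, a perfect group has constant derived series, and a solvable group's derived series terminates. The paper does not prove this lemma itself (it cites Lemma~2.2 of Franks--Handel), but your argument is precisely the expected one and is complete.
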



\section{Mapping class groups and commutator subgroups}

Let $S$ be a compact oriented surface of genus $g$ with $p\geq 0$ marked points
and with $q\geq 0$ boundary components. In this section we give the results on mapping class groups required in
the proof of Theorem~\ref{thm:1}. For further information on mapping class groups,
the reader is referred to~\cite{iv}, or~\cite{fm}. For a simple closed curve $a$ on $S$
we denote by $t_a$ the (isotopy class of the) right Dehn twist about $a$.

\begin{theorem}  \label{thm:dualeqv} $($\cite{km}, {\rm Theorem} $1.2)$
 Let $g\geq 1$. Suppose that $a$ and
$b$ are two nonseparating simple closed curves on $S$. Then there is a sequence
\[
a=a_0,a_1,a_2,\ldots,a_k=b
\]
of nonseparating simple closed curves such that $a_{i-1}$ intersects $a_i$ at only one point
\end{theorem}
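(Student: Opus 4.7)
The plan is to induct on the geometric intersection number $n = i(a, b)$.

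The base case $n = 1$ is trivial, with the chain $a, b$. For the base case $n = 0$, the curves $a$ and $b$ are disjoint, and I split according to whether $a \cup b$ separates $S$. If $a \cup b$ does not separate, the complement $S \setminus (a \cup b)$ is connected, and inside it one finds two disjoint arcs joining the sides of $a$ to the sides of $b$ in a crosswise fashion, which close up across $a$ and $b$ to give a simple closed curve $c$ with $i(c,a) = i(c,b) = 1$; since $i(c,a) = 1$, $c$ is automatically nonseparating, and $a, c, b$ is the chain. If $a \cup b$ does separate $S$, then $a$ and $b$ cobound two subsurfaces $R, R'$ with $\partial R = \partial R' = a \sqcup b$, neither of which is an annulus (else $a$ and $b$ would be isotopic), so each has positive genus. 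Picking an arc $\delta$ in $R$ from a point $p_a \in a$ to a point $p_b \in b$ and another arc $\delta'$ in $R'$ with the same endpoints gives a simple closed curve $c = \delta \cup \delta'$ with $i(c,a) = i(c,b) = 1$, again yielding the chain $a, c, b$.

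For the inductive step, suppose $n \geq 2$ and the statement holds for all pairs of nonseparating simple closed curves with smaller intersection. Place $a$ and $b$ in minimal position and pick two points $p, p'$ of $a \cap b$ consecutive along $a$. Let $\alpha$ be the subarc of $a$ from $p$ to $p'$ containing no other intersection with $b$, and let $\beta_1, \beta_2$ be the two subarcs of $b$ from $p$ to $p'$. Resolving the corners of $\alpha \cup \beta_i$ at $p$ and $p'$ on appropriate sides produces two simple closed curves $c_1$ and $c_2$, each having $i(a, c_i) < n$ and $i(b, c_i) < n$. The homological identity $[c_1] + [c_2] = [b]$ in $H_1(S; \Z/2)$ shows that $c_1$ and $c_2$ cannot both be separating: otherwise $[b] = 0$, contradicting that $b$ is nonseparating. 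Let $c$ be whichever of $c_1, c_2$ is nonseparating. By the inductive hypothesis applied to the pairs $(a, c)$ and $(c, b)$, there are chains joining them, and concatenating yields the desired chain from $a$ to $b$.

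The main obstacle is the inductive step, specifically making precise the resolution convention so that the $c_i$ are indeed simple closed curves with the stated intersection bounds and so that the homological identity $[c_1] + [c_2] = [b]$ holds; this amounts to a careful verification via the bigon criterion that the surgered curves are in minimal position with $a$ and $b$. A secondary subtlety is the disjoint separating case of $n = 0$, where no reduction in intersection number is available, and one must use that $a$ and $b$ are nonisotopic together with $g \geq 1$ to guarantee that the cobounded subsurfaces $R, R'$ have enough genus to support the intermediate curve $c$.
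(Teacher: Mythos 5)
The paper does not prove this statement at all --- it is quoted verbatim from \cite{km} (Theorem~1.2 there), and the proof in that reference is precisely the induction on geometric intersection number with the two corner-resolutions that you describe. So your overall route is the expected one: base cases $i(a,b)\le 1$, and for $i(a,b)=n\ge 2$ a surgery producing $c_1,c_2$ with smaller intersection with both $a$ and $b$, at least one of which is nonseparating, followed by concatenation of chains. The base case $i(a,b)=0$ is also handled in the standard way (change of coordinates makes both subcases immediate; your direct arc constructions are fine, and in the separating subcase you do not actually need positive genus of $R,R'$, only an arc in each piece from $a$ to $b$).

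There is one genuine gap as written: the step ``$c_i$ separating $\Rightarrow [c_i]=0$ in $H_1(S;\Z/2)$'' is false when $S$ has boundary, and the surface $S$ of this paper explicitly allows $q\ge 0$ boundary components (indeed the theorem is applied in the paper to surfaces with boundary). For example, on a genus-one surface with two boundary components, a curve splitting off one boundary component is separating but represents the nonzero class of that boundary component. Two standard repairs: (a) cap off the boundary components with disks; a curve separates $S$ if and only if it separates the capped closed surface $\hat S$, and on a closed surface separating is equivalent to mod-$2$ null-homologous, so run your identity $[c_1]+[c_2]=[b]$ in $H_1(\hat S;\Z/2)$; or (b) avoid homology classes of curves altogether and use the mod-$2$ intersection pairing: a separating curve has zero mod-$2$ intersection with every closed curve, while $b$ nonseparating admits $d$ with $\hat\imath(b,d)=1$, and $\hat\imath(c_1,d)+\hat\imath(c_2,d)=\hat\imath(b,d)=1$ forces one $c_i$ to be nonseparating. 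With either repair, and with the resolution conventions you already flag (which give $i(c_i,a)\le n-1$ and $i(c_i,b)\le 1$, so both inductive calls are legitimate), the argument is complete.
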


\begin{theorem}
  Let  $g\geq 2$. Then the mapping class group $\mod (S)$ is generated Dehn twists about nonseparating
  simple closed curves on $S$.
\end{theorem}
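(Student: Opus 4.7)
The plan is to combine the classical theorem of Dehn—that $\mod(S)$ is generated by Dehn twists about arbitrary simple closed curves, possibly separating—with a reduction that rewrites each separating twist as a product of nonseparating ones. Let $N \subset \mod(S)$ denote the subgroup generated by $\{t_a : a \text{ nonseparating}\}$. The goal is to prove $N = \mod(S)$, and since Dehn's theorem already expresses every mapping class as a product of Dehn twists, it suffices to show that $t_c \in N$ for every separating simple closed curve $c$.

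The first step is to observe that $N$ is normal in $\mod(S)$. For any $f\in\mod(S)$ and nonseparating simple closed curve $a$, the curve $f(a)$ is again nonseparating, hence $ft_af^{-1}=t_{f(a)}\in N$. Theorem~\ref{thm:dualeqv} strengthens this by showing that all generators of $N$ are conjugate to a single one already inside $N$: if $|a\cap b|=1$ then $t_a$ and $t_b$ satisfy the braid relation $t_at_bt_a=t_bt_at_b$, which gives $t_a t_b t_a^{-1} = t_{t_a(b)}$, and iterating along a chain of curves provided by Theorem~\ref{thm:dualeqv} propagates the conjugacy between any two nonseparating Dehn twists.

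The second step is to realize enough separating Dehn twists inside $N$. For this I would invoke the chain relation: given a chain $c_1,\dots,c_{2k}$ of nonseparating simple closed curves with $|c_i\cap c_{i+1}|=1$ and $c_i\cap c_j=\emptyset$ for $|i-j|\ge 2$, a regular neighborhood is a genus $k$ subsurface whose single boundary component $d$ satisfies
$$
(t_{c_1}t_{c_2}\cdots t_{c_{2k}})^{4k+2}=t_d,
$$
so $t_d\in N$. The analogous relation for chains of odd length produces a product $t_{d_1}t_{d_2}$ of twists about the two boundary curves, and each factor is then placed in $N$ by a short separate argument. If $c$ is any separating simple closed curve on $S$, then the topological type of the pair $(S,c)$ (taking marked points and boundary components into account) is determined by the genera and the partition of marked points and boundary components between the two sides, and the change of coordinates principle produces a mapping class carrying $c$ onto some boundary curve $d$ arising from a chain as above. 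Normality of $N$ then yields $t_c=ft_df^{-1}\in N$, completing the proof.

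The main obstacle is the chain relation itself, which is not stated in the excerpt and must be imported from the standard literature on mapping class groups; verifying it is a nontrivial surface-theoretic computation. A secondary technicality is the need to show that every isotopy class of separating curve—including those whose two sides contain arbitrary distributions of marked points and boundary components—lies in the $\mod(S)$-orbit of some chain boundary, but this is a routine consequence of the classification of surfaces.
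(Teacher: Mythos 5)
The paper states this theorem without proof or citation; it is the standard result that for $g\geq 2$ the (pure) mapping class group is generated by twists about nonseparating curves (Lickorish's theorem for closed surfaces, extended to marked points and boundary via the Birman exact sequence; cf.\ Farb--Margalit). So there is no in-paper argument to compare against, and your proposal must stand on its own. For a \emph{closed} surface your outline is sound and is one of the standard routes: every separating curve on a closed surface of genus $g\geq 2$ bounds a subsurface of genus $h\geq 1$ with one boundary component, hence is the boundary of a regular neighborhood of an even chain of nonseparating curves, and the chain relation together with normality of $N$ finishes the job.

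However, the surface $S$ in this paper carries $p\geq 0$ marked points and $q\geq 0$ boundary components, and here your key reduction fails. A regular neighborhood of a chain of nonseparating curves contains no marked points and no boundary components of $S$, so the $\mod(S)$--orbit of chain boundaries consists only of separating curves cutting off a positive-genus piece free of marked points and boundary. It does \emph{not} contain, for example, a curve bounding a disk with two marked points, a boundary-parallel curve when $q\geq 2$ or $p\geq 1$, or any separating curve both of whose sides meet the marked points; the ``change of coordinates'' step you call routine is simply false for these curves, and twists about exactly such curves do occur in the standard generating sets for the pure mapping class group. (Your first step also quietly assumes generation by arbitrary Dehn twists in the marked/bounded setting, which itself requires the Birman exact sequence.) The usual repair is precisely that machinery: induct on $p$ and $q$ using the Birman exact sequence and the boundary-capping homomorphism, observing that a point-pushing map along a simple nonseparating loop equals $t_a t_b^{-1}$ for the two (nonseparating) boundary curves of an annular neighborhood of the loop, and that such loops generate $\pi_1$. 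With that added, the theorem follows in the generality the paper needs; as written, your proof covers only the closed case.
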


\begin{theorem}  \label{thm:G'} $($\cite{km}, {\rm Theorem} $2.7)$
  Let  $g\geq 2$. Let $a$ and $b$ be two nonseparating simple closed
  curves on $S$ intersecting at one point. Then the commutator subgroup of $\mod (S)$ is generated normally by $t_at_b^{-1}$.
\end{theorem}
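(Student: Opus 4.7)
The plan is to establish $N=\mod(S)'$, where $N$ denotes the normal closure of $t_at_b^{-1}$ in $\mod(S)$, by proving the two inclusions separately.

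For $N\subseteq \mod(S)'$, the key input is the change of coordinates principle: since $a$ and $b$ are both nonseparating simple closed curves, there is an orientation-preserving diffeomorphism of $S$ carrying $a$ to $b$. Writing $h\in \mod(S)$ for its isotopy class, we obtain $ht_ah^{-1}=t_b$, and a direct calculation gives
\[
t_at_b^{-1}=t_a(ht_ah^{-1})^{-1}=t_aht_a^{-1}h^{-1}=[t_a,h].
\]
Thus $t_at_b^{-1}$ is a commutator, so the normal subgroup it generates lies in $\mod(S)'$.

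For the reverse inclusion, my strategy is to show that the quotient $G=\mod(S)/N$ is cyclic, hence abelian. First I would prove that the images in $G$ of the Dehn twists about all nonseparating simple closed curves coincide. Given any two such curves $c,d$, Theorem~\ref{thm:dualeqv} yields a chain $c=c_0,c_1,\ldots,c_k=d$ of nonseparating curves with consecutive terms intersecting at a single point; a second application of the change of coordinates principle produces $h_i\in\mod(S)$ sending the pair $(a,b)$ to $(c_{i-1},c_i)$, and conjugating the relator $t_at_b^{-1}$ by $h_i$ places $t_{c_{i-1}}t_{c_i}^{-1}$ inside $N$, so $\bar t_{c_{i-1}}=\bar t_{c_i}$ in $G$. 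Iterating along the chain gives $\bar t_c=\bar t_d$. Next, I would invoke the generation statement immediately preceding Theorem~\ref{thm:G'}: $\mod(S)$ is generated by Dehn twists about nonseparating simple closed curves. Combining the two steps shows that $G$ is generated by the single element $\bar t_a$, so $G$ is cyclic, and therefore $\mod(S)'\subseteq N$.

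The main technical point worth flagging is the transitivity of $\mod(S)$ on ordered pairs of nonseparating simple closed curves meeting at one point. This is a standard instance of the change of coordinates principle on $S$, which continues to hold in the presence of marked points and boundary components because the topological type of the complement of such a pair is determined by the data $(g,p,q)$ alone; no case analysis is needed beyond invoking this fact in the right places.
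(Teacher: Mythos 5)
Your proof is correct and follows essentially the standard argument: the paper itself does not reprove this statement (it is quoted from \cite{km}, Theorem 2.7), but the proof there is exactly your two-inclusion scheme --- $t_at_b^{-1}=[t_a,h]$ via change of coordinates for one direction, and cyclicity of the quotient by the normal closure (using Theorem~\ref{thm:dualeqv}, transitivity on ordered pairs of once-intersecting nonseparating curves, and generation by nonseparating twists for $g\geq 2$) for the other.
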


Theorem~\ref{thm:G'} should be interpreted as follows: If a normal subgroup of $\mod (S)$ contains
$t_at_b^{-1}$, then it contains the commutator subgroup of $\mod (S)$, the (normal) subgroup
generated by all commutators $[x,y]=xyx^{-1}y^{-1}$.

\begin{theorem}  \label{thm:G'prf} $($\cite{km}, {\rm Theorem} $4.2)$
  Let $g\geq 2$. Then the commutator subgroup of
  $\mod (S)$ is perfect.
\end{theorem}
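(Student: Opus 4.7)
The plan is to show that the commutator subgroup $K := [\mod(S), \mod(S)]$ equals its own commutator subgroup $[K, K]$. Throughout, let $a, b$ be nonseparating simple closed curves on $S$ meeting transversely in a single point, and set $\tau := t_a t_b^{-1}$.

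I would first reduce the problem to showing that the single element $\tau$ lies in $[K, K]$. This follows from two observations: on the one hand, $[K, K]$ is characteristic in $K$ and $K$ is normal in $\mod(S)$, so $[K, K]$ is normal in $\mod(S)$; on the other hand, by Theorem~\ref{thm:G'}, $K$ coincides with the normal closure of $\tau$ in $\mod(S)$. Hence, as soon as $\tau \in [K, K]$ is established, every conjugate $g\tau g^{-1}$ also lies in $[K, K]$, and these conjugates generate $K$, giving $K \subseteq [K, K]$ and therefore equality.

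To produce $\tau$ as a product of commutators of elements of $K$, I would work with elements of the form $t_x t_y^{-1}$, where $x, y$ are nonseparating simple closed curves. Such elements always lie in $K$: by the change of coordinates principle (or directly from Theorem~\ref{thm:dualeqv}), any two nonseparating simple closed curves are in the same $\mod(S)$-orbit, so $[t_x] = [t_y]$ in the abelianization of $\mod(S)$. Since $g \geq 2$, one can find further nonseparating simple closed curves $c, d$ in $S$ with prescribed intersection patterns with $a$, $b$ and with each other. Using the braid relation $t_x t_y t_x = t_y t_x t_y$ when $i(x, y) = 1$ and the commutation relation $t_x t_y = t_y t_x$ when $i(x, y) = 0$, I would expand commutators such as $[t_c t_a^{-1}, t_b t_d^{-1}]$ (each factor already in $K$) into products of Dehn twists and look for a combination whose overall product equals $\tau$.

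The main obstacle is the explicit calculation in the last step, and the difficulty depends on the genus. For $g \geq 3$ there is enough room in $S$ to embed a four-holed sphere with all four boundary curves nonseparating, and the lantern relation then produces $\tau$ as a product of commutators of elements of $K$ fairly quickly. For $g = 2$ no such embedding exists --- this is ultimately the reason that $\mod(S_2)$ has nontrivial abelianization --- and one must proceed with a direct calculation, for example by combining the chain relation $(t_{a_1} t_{a_2} t_{a_3} t_{a_4} t_{a_5})^6 = 1$ with carefully chosen commutator identities, taking care that \emph{both} factors of every commutator that appears lie in $K$, not merely in $\mod(S)$.
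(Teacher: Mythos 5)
First, a point of comparison: the paper does not prove this statement at all --- it is imported verbatim from \cite{km} (Theorem 4.2 there), so there is no in-paper proof to match your argument against. Your reduction is nonetheless the right skeleton and is sound: $[K,K]$ is characteristic in $K$ and hence normal in $\mod(S)$, so by Theorem~\ref{thm:G'} it suffices to place the single element $\tau=t_at_b^{-1}$ in $[K,K]$; and your observation that every $t_xt_y^{-1}$ with $x,y$ nonseparating already lies in $K$ is correct. The case $g\geq 3$ is also unproblematic, since there $K=\mod(S)$ by Theorem~\ref{thm:H_1} and the claim collapses to Powell's theorem that $\mod(S)$ is perfect, which the lantern relation gives (with the minor caveat that you need all \emph{seven} lantern curves nonseparating, not just the four boundary curves).

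The genuine gap is the case $g=2$, which is the entire content of the theorem there, and which you explicitly defer as ``the main obstacle.'' Nothing in your sketch actually produces $\tau$ as a product of commutators of elements of $K$. The difficulty is real: the natural first move is to write $t_at_b^{-1}=[f,t_b]$ for some $f$ with $f(b)=a$, and one can correct $f$ by a power of a twist about a curve disjoint from $b$ so that $f\in K$; but this only exhibits $\tau$ as an element of $[K,\mod(S)]$, not of $[K,K]$, since $t_b\notin K$. Getting \emph{both} entries into $K$ requires a further, genuinely nontrivial manipulation, and ``look for a combination whose overall product equals $\tau$'' is a search, not a proof. Moreover, your proposed tool for this step, the relation $(t_{a_1}\cdots t_{a_5})^6=1$, holds only on the \emph{closed} genus-2 surface; the theorem is asserted for surfaces with marked points and boundary, where this word is a nontrivial product of boundary twists, so the argument as sketched would not cover the stated generality. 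As it stands, the proposal reduces the theorem correctly but leaves its hardest and only non-classical case unproved; you should either carry out the explicit genus-2 commutator computation (for general $p$ and $q$) or cite \cite{km} for it, as the paper does.
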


Note that, in~\cite{km}, the group $\mod (S)$ of this paper is denoted by ${\mathcal{P}M}_S$. In that paper,
Theorems~\ref{thm:G'} and~\ref{thm:G'prf} above are proved for surfaces with marked points (=punctures),
but the same proof apply to
surfaces with boundary as well since $\mod (S)$ is generated by Dehn twists about nonseparating
simple closed curves.

We record the following well--known relations among Dehn twists.

\begin{lemma}
  Let $a$ and $b$ be two simple closed curves on $S$, and let $t_a$ and $t_b$
  denote the right Dehn twists about them.
  \begin{enumerate}
    \item If $a$ and $b$ are disjoint, then $t_a$ and $t_b$ commute.
    \item If $a$ intersects $b$ transversely at one point, then they satisfy the
    braid relation $t_at_bt_a=t_bt_at_b$.
  \end{enumerate}
\end{lemma}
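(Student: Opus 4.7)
The plan is to handle the two parts separately, using standard surface-topology techniques; both statements are purely local to a regular neighborhood of $a\cup b$, so the argument reduces to small model surfaces.

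For part (1), I would first recall that each Dehn twist $t_c$ admits a representative diffeomorphism supported in an arbitrarily small annular neighborhood $A_c$ of $c$. Since $a$ and $b$ are disjoint, I can isotope them (and choose their annular neighborhoods) so that $A_a\cap A_b=\varnothing$. If $\varphi_a$ and $\varphi_b$ denote the supported representatives, then $\varphi_a$ is the identity off $A_a$ and $\varphi_b$ is the identity off $A_b$, so the two diffeomorphisms commute pointwise. Passing to isotopy classes yields $t_at_b=t_bt_a$.

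For part (2), the key reformulation is to rewrite $t_at_bt_a=t_bt_at_b$ as the conjugation identity $(t_at_b)\,t_a\,(t_at_b)^{-1}=t_b$. I would then appeal to the general change-of-coordinates principle: for any $f\in\mod(S)$ and any simple closed curve $c$, one has $f\, t_c\, f^{-1}=t_{f(c)}$. Applied here, the desired relation is equivalent to showing that the isotopy class of the curve $(t_at_b)(a)$ equals that of $b$. Since the whole question is supported on a regular neighborhood $R$ of $a\cup b$ (a once-holed torus, because $a$ and $b$ meet transversely at one point), I can verify this geometrically inside $R$: draw $a$, apply $t_b$ to obtain a curve that winds once around $b$, then apply $t_a$ to untangle the result into a curve isotopic to $b$.

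The step I expect to be the main obstacle, though still routine, is the picture-level verification that $t_a t_b(a)\simeq b$ on the once-holed torus $R$. This can alternatively be bypassed by recalling that the mapping class group of the once-holed torus is generated by $t_a$ and $t_b$ and is isomorphic to the braid group $B_3$ (or to $\mathrm{SL}(2,\Z)$ on the closed torus), where the braid relation between the two standard generators is built into the presentation. Either way, once $(t_at_b)(a)\simeq b$ is established, the conjugation identity above immediately gives $t_at_bt_a=t_bt_at_b$ in $\mod(S)$, completing the proof.
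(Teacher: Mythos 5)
Your proposal is correct and is the standard argument (disjoint supports for part (1); the conjugation identity $f t_c f^{-1}=t_{f(c)}$ together with the verification $t_at_b(a)\simeq b$ on a neighborhood of $a\cup b$ for part (2)). The paper records this lemma as well known and gives no proof, so there is nothing to compare against; the only caution is that your suggested shortcut via the presentation of the mapping class group of the once-holed torus as $B_3$ risks circularity, since that presentation is usually established using the braid relation itself, so the picture-level check of $t_at_b(a)\simeq b$ (or its homological version on the torus) should be regarded as the actual content.
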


Recall that the first homology group $H_1(G;\Z)$ of a group $G$ is isomorphic to
the abelianization $G/G'$, where $G'$ is the (normal) subgroup of
$G$ generated by all commutators $[g_1,g_2]$.

\begin{theorem}  \label{thm:H_1} $($\cite{k02tjm}, {\rm Theorem} $5.1)$
  Let $g\geq 1$. Then the first homology group
  $H_1(\mod (S);\Z)$ is
  \begin{itemize}
    \item[(1)] trivial if $g\geq 3$,
    \item[(2)]  isomorphic to the cyclic group of order $10$ if $g=2$,
    \item[(3)]  isomorphic to the cyclic group of order $12$ if $(g,q)=(1,0)$, and
    \item[(4)]  isomorphic to $\Z ^q$ if $g=1$ and $q\geq 1$.
  \end{itemize}
\end{theorem}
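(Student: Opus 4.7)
The plan is to compute the abelianization $H_1(\mod(S);\Z) = \mod(S)/\mod(S)'$ by first showing it is cyclic (when $g \geq 2$) and then pinning down its order using a small number of Dehn-twist relations, with the genus $1$ and the marked-points/boundary cases reduced to the closed case.

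First, for $g \geq 2$, Theorem 3.2 shows that $\mod(S)$ is generated by Dehn twists about nonseparating simple closed curves, and Theorem 3.1 (together with the classical change-of-coordinates principle) shows that any two such Dehn twists are conjugate in $\mod(S)$. Consequently, in the abelianization all these generators descend to a single element $x$, and $H_1(\mod(S);\Z)$ is cyclic, generated by $x$. To handle $g \geq 3$, I would exhibit a four-holed sphere embedded in $S$ whose four boundary curves $x_1,\ldots,x_4$ and three ``diagonal'' curves $y_1,y_2,y_3$ are all nonseparating in $S$; this is possible precisely when $g \geq 3$. The lantern relation
\[
t_{x_1}t_{x_2}t_{x_3}t_{x_4} = t_{y_1}t_{y_2}t_{y_3}
\]
then abelianizes to $4x = 3x$, forcing $x=0$ and $H_1(\mod(S);\Z)=0$.

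For $g=2$, no lantern on $\Sigma_2$ has all seven curves nonseparating, so I would instead use the hyperelliptic chain relation $(t_{c_1}t_{c_2}t_{c_3}t_{c_4}t_{c_5})^6 = 1$ on the closed surface (giving $30x = 0$), together with a second relation such as the two-chain relation $(t_at_b)^6 = t_d$ combined with an expression for the abelianization class of the separating twist $t_d$, to cut the order down to a divisor of $10$. Equality with $10$ would then be established by producing a surjection $\mod(\Sigma_2) \twoheadrightarrow \Z/10$ by hand, for instance from a known finite presentation of $\mod(\Sigma_2)$, or via the Meyer signature cocycle. To pass from the closed case to arbitrary $g \geq 2$ surfaces with marked points and boundary, I would use the Birman exact sequence for marked points and the capping sequence for boundary, verifying that the additional generators (point-pushes and boundary Dehn twists) either die in the abelianization or are already absorbed into the cyclic generator $x$.

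The case $g = 1$ is handled separately: for the closed torus, $\mod(\Sigma_1) \cong \mathrm{SL}(2,\Z)$, whose abelianization is well known to be $\Z/12$; and for the torus with $q \geq 1$ boundary components one checks that the capping sequence splits after abelianization, with each boundary Dehn twist contributing an independent $\Z$ summand and yielding $\Z^q$. The main obstacle is the $g=2$ case: isolating the exact order $10$ (as opposed to an a priori larger divisor such as $30$ or $60$) requires a careful choice of relations together with a computation of the class of a separating twist in the abelianization, and the lower bound must come from an independently constructed nontrivial map onto $\Z/10$, which is not suggested by the relations themselves.
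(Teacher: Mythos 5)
This theorem is not proved in the paper at all: it is quoted verbatim from the author's survey \cite{k02tjm} (Theorem~5.1), so there is no internal proof to compare against. Your outline is the standard argument for this computation (Harer's lantern-relation trick, chain relations, and $\mathrm{SL}(2,\Z)$ for the torus), and its overall architecture is sound: conjugacy of all Dehn twists about nonseparating curves makes $H_1$ cyclic for $g\geq 2$; a lantern with all seven curves nonseparating exists exactly when $g\geq 3$ and abelianizes to $4x=3x$; and the genus--$1$ closed case reduces to the abelianization of $\mathrm{SL}(2,\Z)$.

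Two places in your sketch are genuinely thin. First, in the $g=2$ case the two relations you name do not by themselves cut the order down to $10$: the hyperelliptic relation gives $30x=0$, and the two-chain relation $(t_at_b)^6=t_d$ only gives $[t_d]=12x$, which is useless until you have an independent expression for $[t_d]$. You need either the $4$-chain relation $(t_1t_2t_3t_4)^{10}=1$ on the closed genus--$2$ surface (giving $40x=0$, hence $\gcd(40,30)x=10x=0$) or the relation $\iota^2=1$ for the hyperelliptic involution $\iota=t_1t_2t_3t_4t_5t_5t_4t_3t_2t_1$ (giving $20x=0$); and the matching lower bound really does require the Birman--Hilden presentation of $\mathrm{Mod}(\Sigma_2)$, from which the abelianization is read off as exactly $\Z_{10}$. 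You flag this honestly, but it is the heart of case (2). Second, your description of the $g=1$, $q\geq 1$ case is off: the boundary Dehn twists do \emph{not} contribute independent $\Z$ summands --- already for $q=1$ the group is $\langle a,b\mid aba=bab\rangle$ with abelianization $\Z$ generated by the class of a \emph{nonseparating} twist, and the boundary twist $(t_at_b)^6$ maps to $12$ times that generator. The correct statement is that $H_1\cong\Z^q$, but identifying generators requires an actual presentation or a careful run through the capping sequence, not the heuristic you give. Finally, the independence of the answer from $p$ and $q$ when $g\geq 2$ (asserted via Birman and capping sequences) is real work that your sketch defers; it goes through because the extra generators are absorbed into, or killed alongside, the single class $x$, but that verification is part of the proof.
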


Note that since any two Dehn twists about nonseparating simple closed curves are conjugate in $\mod (S)$,
their classes in $H_1(\mod (S);\Z)$ are equal. In particular, we conclude the next lemma.

\begin{lemma}  \label{lem:abelianrep} Let $g\geq 1$, and let $b$ and $c$ be two nonseparating simple closed curves on $S$.
If $H$ is an abelian group and if $\phi :\mod (S)\to H$ is a homomorphism, then $\phi (t_b)= \phi (t_c)$.
\end{lemma}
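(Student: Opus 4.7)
The plan is to reduce the statement to the standard fact that any two Dehn twists about nonseparating simple closed curves are conjugate in $\mod(S)$, which is noted in the sentence immediately preceding the lemma and follows from the change of coordinates principle: since $b$ and $c$ are both nonseparating, the complements $S \setminus b$ and $S \setminus c$ are homeomorphic, so there is an orientation--preserving self--diffeomorphism $f$ of $S$ sending $b$ to $c$. Representing its isotopy class also by $f \in \mod(S)$, the standard identity $f t_b f^{-1} = t_{f(b)} = t_c$ holds in $\mod(S)$.

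Having this conjugacy, the lemma is immediate. Applying the homomorphism $\phi$ to the relation $t_c = f t_b f^{-1}$ yields
\[
\phi(t_c) = \phi(f)\phi(t_b)\phi(f)^{-1}.
\]
Since $H$ is abelian, the conjugation on the right is trivial, so $\phi(t_c) = \phi(t_b)$, which is the desired equality.

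Equivalently, one can argue via Theorem~\ref{thm:H_1}: any homomorphism from $\mod(S)$ to an abelian group $H$ factors through the abelianization $H_1(\mod(S);\Z)$, and conjugate elements have the same class in the abelianization. This viewpoint is the one the sentence preceding the lemma already suggests, so it may be the cleanest to present. Either way, there is no real obstacle here; the only thing to verify is the existence of the diffeomorphism taking $b$ to $c$, which is a routine application of the classification of surfaces and is well known.
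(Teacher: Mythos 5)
Your proof is correct and is essentially the argument the paper intends: the sentence preceding the lemma records that any two Dehn twists about nonseparating simple closed curves are conjugate, and the lemma follows immediately since conjugate elements have equal images under a homomorphism to an abelian group. Your spelling out of the change-of-coordinates step and the identity $ft_bf^{-1}=t_{f(b)}$ is just a more detailed version of the same reasoning.
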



\section{Homomorphisms $\mod(S)\to \GL(n,\C)$}
In this section we prove Theorem~\ref{thm:1}. So let $n\leq 2g-1$ and let $\phi:\mod(S)\to \GL(n,\C)$
be a homomorphism.

For the proof, we adopt the proof of Theorem~1.1 in~\cite{fh}.
Franks and Handel use the fact that when $g\geq 3$ the group $\mod (S)$
is perfect. If this is rephrased as "the commutator subgroup of $\mod (S)$ is perfect" then
it is still true for the case $g=2$ as well, and that is what we use below.
The proof given in~\cite{fh} for the case $g\geq 3$ and $n\leq 2$,
also works for the case $g=2$ and $n\leq 2$ with a slight modification. For the
case $g=2$ and $n=3$, we need to analyze six possible Jordan forms of the image of the
Dehn twist about a nonseparating simple closed curve. Once Theorem~\ref{thm:1} is established for
the cases $g=2$ and $n\leq 3$, we then again follow a modified version of the idea of Franks and Handel to
induct $g$.

Following~\cite{fh}, for a simple closed curve $x$ on $S$ we denote $\phi (t_x)$ by $L_x$. If $\lambda$
is an eigenvalue of a linear operator $L$, the corresponding eigenspace is denoted by $E_\lambda (L)$.
We write $E_\lambda^x$ for $E_\lambda (L_x)$.

\begin{proposition} \label{prop:2}
  Let $g= 2$ and $n\leq 2$. Then ${\rm Im}(\phi)$ is a quotient of the cyclic group $\Z_{10}$.
  \end{proposition}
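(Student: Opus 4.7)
The plan is to prove that $\phi$ vanishes on the commutator subgroup $\mod(S)'$; once this is done, $\phi$ descends to the abelianization $H_1(\mod(S);\Z)$, which is cyclic of order $10$ by Theorem~\ref{thm:H_1}, and the conclusion follows. The case $n=1$ is immediate: $\GL(1,\C)\cong\C^{*}$ is abelian, so $\phi(\mod(S)')=\{1\}$ automatically.

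For $n=2$ I would follow the strategy of Lemma~3.1 in~\cite{fh}. The intermediate goal is to show that the image $\phi(\mod(S))$ lies in a solvable subgroup of $\GL(2,\C)$. Once this is known, combine it with Theorem~\ref{thm:G'prf}: the subgroup $\phi(\mod(S)')$ is a perfect subgroup of a solvable group, so it is perfect and solvable at the same time, hence trivial (this is essentially Lemma~\ref{prop:solv}). This is the \emph{modification} referred to in the introduction: in~\cite{fh} the authors use that $\mod(S)$ itself is perfect for $g\geq 3$, whereas for $g=2$ one must drop to the commutator subgroup and invoke Theorem~\ref{thm:G'prf} instead.

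To prove solvability of $\phi(\mod(S))$, I would exhibit a common invariant line in $\C^{2}$ for the matrices $L_{x}=\phi(t_{x})$ as $x$ ranges over all nonseparating simple closed curves; in a basis having this line as its first coordinate axis, every such $L_{x}$ is upper triangular, and since Dehn twists about nonseparating curves generate $\mod(S)$ (Theorem~\ref{thm:dualeqv} and the following theorem in the text), the whole image lies in the Borel subgroup, which is solvable by Lemma~\ref{lem:uptrglr}. The tools available are the braid relation $L_{a}L_{b}L_{a}=L_{b}L_{a}L_{b}$ when $a,b$ meet once, the commutation $L_{a}L_{b}=L_{b}L_{a}$ when $a,b$ are disjoint, the equality $\det L_{a}=\det L_{b}$ coming from Lemma~\ref{lem:abelianrep} applied to $\det\circ\phi$, and Theorem~\ref{thm:dualeqv} which lets one propagate invariance across any pair of nonseparating curves through a chain in which successive curves intersect once.

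The main obstacle will be the explicit case analysis on the Jordan form of $L_{a}$ for a single fixed nonseparating curve $a$. If $L_{a}$ is a scalar matrix it is central, and for any $b$ meeting $a$ once the braid relation collapses to $L_{a}=L_{b}$, so every line in $\C^{2}$ is common; if $L_{a}$ is a non-scalar Jordan block one must check that its unique eigenline is forced to be $L_{b}$-invariant by the braid relation; and if $L_{a}$ has two distinct eigenvalues, one must use the braid relation together with $\det L_{a}=\det L_{b}$ to show that one of the two one-dimensional eigenspaces of $L_{a}$ is preserved by $L_{b}$, and then verify that the choice of eigenline is consistent as one moves along a chain of nonseparating curves supplied by Theorem~\ref{thm:dualeqv}.
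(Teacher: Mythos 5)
Your high-level plan (kill $\phi$ on the commutator subgroup and invoke Theorem~\ref{thm:H_1}) is the paper's, and your $n=1$ case and the scalar case for $n=2$ are fine. But the concrete step you propose for the two non-scalar Jordan types is not valid: the braid relation between $L_a$ and $L_b$, even combined with $\det L_a=\det L_b$ (which is automatic, since $t_a$ and $t_b$ are conjugate in $\mod(S)$), does \emph{not} force $L_b$ to preserve an eigenline of $L_a$. For example, the reduced Burau matrices
\[
A=\left(\begin{array}{cc} -t & 1\\ 0 & 1\end{array}\right),\qquad
B=\left(\begin{array}{cc} 1 & 0\\ t & -t\end{array}\right)
\]
satisfy $ABA=BAB$ and have the same determinant and the same pair of distinct eigenvalues $\{-t,1\}$, yet for generic $t$ the matrix $B$ preserves neither eigenline of $A$ (the eigenlines are spanned by $(1,0)$ and $(1,1+t)$, and $B(1,0)=(1,t)$, $B(1,1+t)=(1,-t^2)$). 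A similar two-parameter family shows the same failure when $L_a$ is a nontrivial Jordan block: $A=\left(\begin{array}{cc}\lambda & 1\\ 0&\lambda\end{array}\right)$ and $B=\left(\begin{array}{cc}\lambda & 0\\ -\lambda^2 & \lambda\end{array}\right)$ satisfy the braid relation but $B$ does not fix the eigenline of $A$. So the case analysis as you describe it stalls exactly at the step you flag as the main obstacle, and the determinant constraint does not rescue it.

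The missing idea is to make \emph{disjointness}, not the braid relation, do the triangularizing. The paper picks three nonseparating curves $a,b,c$ with $b$ and $c$ meeting once and $a$ disjoint from $b\cup c$ (e.g.\ $c_3,c_4,c_1$ in a chain). Then $L_b$ and $L_c$ \emph{commute} with $L_a$, hence preserve its eigenspaces: if $L_a$ has distinct eigenvalues they are simultaneously diagonal, and if $L_a$ is a nontrivial Jordan block they are simultaneously upper triangular with both diagonal entries equal (commuting with a $2\times2$ Jordan block forces this). In either case $L_b$ and $L_c$ commute with each other, so the braid relation between \emph{them} collapses to $L_b=L_c$, i.e.\ $\phi(t_bt_c^{-1})=I$; Theorem~\ref{thm:G'} (normal generation of $G'$ by $t_bt_c^{-1}$) then kills $\phi(G')$ outright. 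Note that this route needs neither a common invariant line for all nonseparating curves nor solvability of the whole image, and the chain-propagation and consistency checks you anticipate are unnecessary for $n\leq 2$; the solvable-subgroup/perfect-commutator mechanism you describe is the one the paper reserves for $n=3$.
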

\begin{proof}
If $n=1$ then $\GL (n,\C)=\C^*$ is abelian. Hence, $\phi$ factors through the first homology of
$\mod (S)$, which is isomorphic to $\Z_{10}$. So assume that $n=2$. In this proof, we set $G=\mod (S)$.

Let $a,b$ and $c$ be three nonseparating simple closed curves on $S$
such that $a$ is disjoint from $b\cup c$,
and that $b$ intersects $c$ transversely at one point.
 Clearly, in order to complete the proof, it suffices to prove that $\phi (G')=\{I\}$,
where $G'$ is the commutator subgroup of $G$. There are three possibilities for the Jordan form of $L_a$.

(i). Suppose that $L_a$ has two distinct eigenvalues $\lambda_1$ and $\lambda_2$, with corresponding
eigenvectors $v_1$ and $v_2$; $L_a(v_i)=\lambda_i v_i$. With respect to the basis $\{ v_1, v_2\}$,
the matrix $L_a$ is diagonal. Since $L_b$ and $L_c$ preserve each eigenspace of $L_a$, they are diagonal too.
In particular, they commute. Now from the braid relation $L_bL_cL_b=L_cL_bL_c$, we get $L_b=L_c$, or
$\phi(t_bt_c^{-1})=I$. Since $G'$ is generated normally by $t_bt_c^{-1}$ (c.f. Theorem~\ref{thm:G'}),
we conclude that $\phi (G')$  is trivial

(ii). If the matrix $L_a$ has only one eigenvalue $\lambda$ and if the Jordan form of $L_a$ is $\lambda I$,
then $L_x=\lambda I$ for each nonseparating simple closed curve $x$ on $S$. This is because $L_x$ is conjugate
to $L_a$. Since the group
$G$ is generated by Dehn twists about such curves,
we have that $\phi (G)$ is cyclic.

(iii).
Suppose finally that the Jordan form of $L_a$ is not diagonal, so that the matrix of $L_a$ is
\[ \left(
  \begin{array}{cc}
    \lambda & 1 \\
    0 & \lambda \\
  \end{array}
\right)
\]
in some fixed basis. Because $L_b$ and $L_c$ preserve the eigenspace of $L_a$, with respect to the same basis,
the matrices $L_b$ and $L_c$ are upper triangular whose diagonal entries are $\lambda$. In particular,
we have $L_bL_c=L_cL_b$. From the braid relation $L_bL_cL_b=L_cL_bL_c$ again, we get $L_b=L_c$, or
$\phi(t_bt_c^{-1})=I$. From this we conclude that $\phi (G')$  is trivial.

This completes the proof of the proposition.
\end{proof}

\begin{lemma} \label{lem:eigenspace}
Let $a,b,x,y$ be four nonseparating simple closed curves on $S$ such that there is
an orientation--preserving diffeomorphism $f$ of $S$ with $f(x)=a$ and $f(y)=b$.
Let $\lambda$ be an eigenvalue of $L_a=\phi (t_a)$.
Then $E_\lambda^a=E_\lambda^b$ if and only if $E_\lambda^x=E_\lambda^y$.
\end{lemma}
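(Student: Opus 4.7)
The plan is to exploit the fact that the diffeomorphism $f$ represents a mapping class which conjugates $t_x$ to $t_a$ and $t_y$ to $t_b$, and then to transport the whole statement across this conjugation via $\phi$.

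Concretely, I would first record the standard fact that for an orientation--preserving element $f\in \mod(S)$ and any simple closed curve $\gamma$ on $S$, one has $f\,t_\gamma\, f^{-1}=t_{f(\gamma)}$. Applying this to the hypothesis $f(x)=a$ and $f(y)=b$ yields the identities $f\,t_x\,f^{-1}=t_a$ and $f\,t_y\,f^{-1}=t_b$ in $\mod(S)$. Pushing these through the homomorphism $\phi$ and setting $F=\phi(f)\in \GL(n,\C)$, I obtain the matrix conjugations
\[
L_a=F\,L_x\,F^{-1},\qquad L_b=F\,L_y\,F^{-1}.
\]

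The second step is a purely linear--algebraic observation: if $M=FLF^{-1}$, then $E_\mu(M)=F\bigl(E_\mu(L)\bigr)$ for every eigenvalue $\mu$. Applying this to both conjugations above gives
\[
E_\lambda^a=F(E_\lambda^x),\qquad E_\lambda^b=F(E_\lambda^y).
\]
Since $F$ is an invertible linear map, it sends distinct subspaces to distinct subspaces and equal subspaces to equal subspaces. Therefore $E_\lambda^a=E_\lambda^b$ if and only if $F(E_\lambda^x)=F(E_\lambda^y)$, if and only if $E_\lambda^x=E_\lambda^y$, which is precisely the claimed equivalence.

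There is essentially no obstacle: the argument is a formal consequence of the naturality of Dehn twists under conjugation combined with the fact that conjugation in $\GL(n,\C)$ moves eigenspaces by the conjugating matrix. The only point requiring mild care is the standing convention on $\mod(S)$: one must interpret $f$ as a diffeomorphism that respects the marked points and boundary components (so that it represents an element of $\mod(S)$), and note that the orientation--preserving hypothesis is what prevents the formula $f\,t_\gamma\,f^{-1}=t_{f(\gamma)}$ from picking up an unwanted inverse. Both are automatic in the context in which the lemma will be applied.
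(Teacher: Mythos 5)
Your proof is correct and follows exactly the paper's argument: conjugating $t_x$ and $t_y$ to $t_a$ and $t_b$ via $f$, applying $\phi$ to get $L_a=FL_xF^{-1}$ and $L_b=FL_yF^{-1}$, and using that conjugation carries eigenspaces to $E_\lambda^a=F(E_\lambda^x)$ and $E_\lambda^b=F(E_\lambda^y)$, whence the equivalence follows from invertibility of $F$. No issues.
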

\begin{proof}
Let $F=\phi (f)$.
The assumptions $f(x)=a$ and $f(y)=b$ imply that $ft_xf^{-1}=t_a$ and $ft_yf^{-1}=t_b$, and hence
$FL_xF^{-1}=L_a$ and $FL_yF^{-1}=L_b$. Therefore,
\[ E_\lambda^a = E_\lambda (L_a) =E_\lambda (FL_xF^{-1})=F (E_\lambda^x)\] and
\[  E_\lambda^b = E_\lambda (L_b) =E_\lambda (FL_yF^{-1})=F (E_\lambda^y).
 \]

 The lemma now follows from these two.
\end{proof}

\begin{proposition} \label{prop:g=2}
  If $g= 2$ then the image of any homomorphism  $\phi:\mod (S)\to \GL (3,\C)$
  is a quotient of the cyclic group $\Z_{10}$
\end{proposition}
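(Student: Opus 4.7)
The reduction follows the same pattern as Proposition~\ref{prop:2}. Since $H_1(\mod(S);\Z)\cong\Z_{10}$ by Theorem~\ref{thm:H_1}, it suffices to prove $\phi(G')=\{I\}$, where $G' = [\mod(S),\mod(S)]$. By Theorem~\ref{thm:G'}, $G'$ is normally generated by $t_bt_c^{-1}$ for any pair of nonseparating simple closed curves $b,c$ intersecting transversely at a single point, so the whole task reduces to showing $L_b=L_c$ for any such pair.

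Pick such $b,c$ and choose a nonseparating simple closed curve $a$ disjoint from $b\cup c$; such an $a$ exists because the complement of a regular neighborhood of $b\cup c$ in $S$ is a once-holed torus. Then $L_a$ commutes with both $L_b$ and $L_c$, and $L_bL_cL_b = L_cL_bL_c$. Since any two nonseparating Dehn twists are conjugate in $\mod(S)$, the matrices $L_a,L_b,L_c$ share a common Jordan form in $\GL(3,\C)$. The proof then proceeds by a case analysis on the six possible Jordan forms of $L_a$: three distinct eigenvalues; $\mathrm{diag}(\lambda,\lambda,\mu)$ with $\lambda\neq\mu$; the scalar matrix $\lambda I$; $J_2(\lambda)\oplus(\mu)$ with $\lambda\neq\mu$; $J_2(\lambda)\oplus(\lambda)$; and a single Jordan block $J_3(\lambda)$.

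The straightforward cases are handled as follows. If $L_a$ has three distinct eigenvalues, then $L_b,L_c$ are both diagonal in the eigenbasis of $L_a$, they commute, and the braid relation forces $L_b=L_c$. If $L_a=\lambda I$, then $L_x=\lambda I$ for every nonseparating simple closed curve $x$, so $\mathrm{Im}(\phi)$ is cyclic and factors through $H_1(\mod(S);\Z)\cong\Z_{10}$. If $L_a=J_3(\lambda)$, then matching Jordan forms places $L_b,L_c$ in the centralizer of $J_3(\lambda)$, which consists exactly of matrices of the form appearing in Lemma~\ref{lem:matrix2} with $z=w=\lambda$, so the lemma directly yields $L_b=L_c$. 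In the forms $J_2(\lambda)\oplus(\mu)$ and $J_2(\lambda)\oplus(\lambda)$, the $L_a$-invariant decomposition of $\C^3$ into generalized eigenspaces block-diagonalizes $L_b$ and $L_c$; the Jordan form constraint restricts the blocks to very special shapes, and either a $2\times 2$ analog of Lemma~\ref{lem:matrix2} or a direct computation showing that $(L_b-\lambda I)(L_c-\lambda I)=0$ (whence $L_b$ and $L_c$ commute) yields $L_b=L_c$.

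The delicate case is $L_a=\mathrm{diag}(\lambda,\lambda,\mu)$ with $\lambda\neq\mu$. Write $L_b=B\oplus\beta$ and $L_c=C\oplus\gamma$ relative to the decomposition $\C^3=E_\lambda^a\oplus E_\mu^a$, with $B,C\in\GL(2,\C)$ and $\beta,\gamma\in\C^*$. The scalar braid relation on the $1$-dimensional piece forces $\beta=\gamma$, and matching Jordan forms on $L_b$ leaves only the options $(\beta=\mu,\ B=\lambda I_2)$ and $(\beta=\lambda,\ B$ diagonalizable with spectrum $\{\lambda,\mu\})$, with the same dichotomy for $C,\gamma$. A mixed configuration reduces the scalar braid on the $1$-dimensional piece to $\lambda\mu^2=\lambda^2\mu$, contradicting $\lambda\neq\mu$. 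If both $L_b$ and $L_c$ are of the first type, then $L_b=L_c=L_a$ immediately. The stubborn subcase is when both are of the second type: an explicit computation shows that $BCB=CBC$ for $B,C\in\GL(2,\C)$ diagonalizable with common spectrum $\{\lambda,\mu\}$ admits a one-parameter family of non-equal solutions, so the braid relation alone does not force $B=C$. To close this subcase I would introduce a fourth nonseparating simple closed curve $a'$, disjoint from $b\cup c$ and intersecting $a$ in a single point, chosen inside the complementary once-holed torus of $b\cup c$. Then $L_{a'}$ commutes with both $L_b$ and $L_c$, has the same Jordan form as $L_a$, and additionally satisfies $L_aL_{a'}L_a=L_{a'}L_aL_{a'}$; re-running the Jordan-form dichotomy on $L_{a'}$ while tracking its interaction with $L_a$ should rigidify the configuration enough to force $B=C$. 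A backup route, if the algebra becomes unwieldy, is to exhibit $\phi(G')$ inside a solvable subgroup of $\GL(3,\C)$, for instance by producing a common upper-triangular form, whereupon Theorem~\ref{thm:G'prf} together with Lemma~\ref{prop:solv} yields $\phi(G')=\{I\}$ and completes the proof.
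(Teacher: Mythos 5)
Your reduction (kill $\phi(G')$ by showing $L_b=L_c$ for one pair of once--intersecting nonseparating curves), your six-fold Jordan case division, and your treatment of the diagonalizable, scalar, $J_3(\lambda)$ and $J_2(\lambda)\oplus(\mu)$ cases all match the paper in substance. But the proof is not complete, and the place where it stalls is exactly the place where the paper has to change tools. In the subcase $L_a\sim\mathrm{diag}(\lambda,\lambda,\mu)$ with both $L_b,L_c$ of your ``second type,'' you correctly observe that the braid relation plus commutation with $L_a$ admits non-equal solutions (the reduced Burau representation of $B_3$ gives such pairs), and then you write that adding a curve $a'$ ``should rigidify the configuration enough to force $B=C$.'' That is not a proof, and no amount of braid/commutation relations among finitely many twists can close this by itself: a representation in which every $L_x$ is upper triangular satisfies all such relations without forcing $L_b=L_c$. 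What actually kills that scenario is group theory, not linear algebra --- the commutator subgroup $G'$ is perfect (Theorem~\ref{thm:G'prf}) and has no nontrivial solvable quotient (Lemma~\ref{prop:solv}). The paper's case (vi) runs a dichotomy on whether $E_\lambda^a=E_\lambda^b$ for a curve $b$ meeting $a$ \emph{once}: if equal, Lemma~\ref{lem:eigenspace} and Theorem~\ref{thm:dualeqv} propagate the equality to all nonseparating curves, the common $2$--plane $E_\lambda^a$ is $\phi(G)$--invariant, Proposition~\ref{prop:2} makes the restricted action cyclic, and the perfect-versus-solvable argument finishes; if unequal, a chain $c_1,\dots,c_5$ with $c_4=a$, $c_5=b$ and the basis $v_1\in E_\lambda^a\cap E_\lambda^b$, $v_2\in E_\lambda^a$, $v_3\in E_\mu^a$ pins down the diagonals of $L_{c_1},L_{c_2}$, and the residual bad subcases are eliminated by contradictions among further braid relations. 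Your ``backup route'' is thus the main road, and you would still need to produce the common triangular form, which requires the eigenspace-propagation machinery you never set up.

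A secondary flaw: for $L_a\sim J_2(\lambda)\oplus(\lambda)$ your justification is wrong as stated --- there is only one generalized eigenspace (all of $\C^3$), so nothing is block-diagonalized, the centralizer of $L_a$ is not upper triangular in the given basis, and $(L_b-\lambda I)(L_c-\lambda I)$ need not vanish for elements of that centralizer. The case is nonetheless salvageable: any matrix commuting with $L_a$ preserves the flag $\mathrm{Im}(L_a-\lambda I)\subset\ker(L_a-\lambda I)\subset\C^3$, hence is upper triangular after reordering the basis, and the single-eigenvalue constraint forces constant diagonal $\lambda$, so the full Lemma~\ref{lem:matrix2} applies. (The paper instead reaches this case through the same eigenspace dichotomy as above, again falling back on Proposition~\ref{prop:2} and solvability in the ``equal eigenspaces'' branch.) As written, then, your proposal proves the easy cases and correctly diagnoses, but does not resolve, the hard one.
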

\begin{proof}
Let $G=\mod (S)$, and let $G'$ denote the commutator subgroup of $G$.
Since $H_1(G,\Z)$ is isomorphic to $\Z_{10}$,
it suffices to prove that $\phi (G)$ is abelian, or equivalently that $\phi (G')$ is trivial.

Since $\phi (G')$ is generated normally by a single element $t_xt_y^{-1}$ for any two
nonseparating simple closed curves $x$ and $y$ intersecting at one point,
it is also sufficient to find two such curves with $\phi ( t_xt_y^{-1})=I$.

Let $a$ be a nonseparating simple closed curve on $S$.
The Jordan form of $L_a$ is one of the following six matrices:\\

  (i)
 $ \left(
   \begin{array}{ccc}
     \lambda_1 & 0 & 0 \\
     0 & \lambda_2 & 0 \\
     0 & 0 & \lambda_3 \\
   \end{array}
 \right),$ \ (ii)
 $ \left(
   \begin{array}{ccc}
     \lambda & 0 & 0 \\
     0 & \lambda & 0 \\
     0 & 0 & \lambda \\
   \end{array}
 \right),$  \ (iii)
 $ \left(
   \begin{array}{ccc}
     \lambda & 0 & 0 \\
     0 & \mu & 1 \\
     0 & 0 & \mu \\
   \end{array}
 \right),$

 (iv)
 $ \left(
   \begin{array}{ccc}
     \lambda & 1 & 0 \\
     0 & \lambda & 1 \\
     0 & 0 & \lambda \\
   \end{array}
 \right),$ \  (v)
 $ \left(
   \begin{array}{ccc}
     \lambda & 0 & 0 \\
     0 & \lambda & 1 \\
     0 & 0 & \lambda \\
   \end{array}
 \right),$  \ (vi)
  $ \left(
   \begin{array}{ccc}
     \lambda & 0 & 0 \\
     0 & \lambda & 0 \\
     0 & 0 & \mu \\
   \end{array}
 \right) .$\\
Here, distinct notations represent distinct eigenvalues.
In each case we fix a basis with respect to which the matrix $L_a$ is in its Jordan form.
Recall that the eigenspace of $L_x$ corresponding to an eigenvalue $\lambda$ is denoted by $E_\lambda^x$.
We now analyze each case.

(i). In this case, each eigenspace of $L_a$ is $1$--dimensional. Let $b_1$ and $b_2$ be two
nonseparating simple closed curves intersecting at one point such that each $b_i$ is disjoint from $a$.
Then each eigenspace $E_{\lambda_i}^a$ is invariant under each $L_{b_i}$,
so that $L_{b_i}$ are diagonal. In particular, $L_{b_1}$ and $L_{b_2}$ commute. Now the braid relation
$L_{b_1}L_{b_2}L_{b_1}=L_{b_2}L_{b_1}L_{b_2}$ implies that $L_{b_1}=L_{b_2}$. Thus, we have $\phi (L_{b_1})= \phi (L_{b_2})$,
i.e. $\phi (t_{b_1}t_{b_2}^{-1})=I$.

(ii). If $b$ is a nonseparating simple closed curve on $S$, then $L_b$ is conjugate to
$L_a$, so that $L_b=\lambda I$. Since $G$ is generated by all such Dehn twists,
we get that $\phi (G)$ is cyclic.

(iii). Let $b_1$ and $b_2$ be two nonseparating simple closed curves intersecting at one point
such that they are disjoint from $a$.
The matrices $L_{b_1}$ and $L_{b_2}$ preserve each eigenspace $E_\lambda^a$ and $E_\mu^a$, so that they are
of the form
\[ L_{b_i}=\left(
   \begin{array}{ccc}
     x_i & 0 & w_i \\
     0 & y_i & u_i \\
     0 & 0  & z_i \\
   \end{array}
 \right).\]
The braid relation $L_{b_1}L_{b_2}L_{b_1}=L_{b_2}L_{b_1}L_{b_2}$ implies that $x=x_1=x_2$, $y=y_1=y_2$ and $z=z_1=z_2$.
The equality $L_{b_i}L_a=L_aL_{b_i}$ then gives $w_i=0$ and $y=z$. Hence, we have $x=\lambda$, $y=z=\mu$, and so
\[ L_{b_i}=\left(
   \begin{array}{ccc}
     \lambda & 0 & 0 \\
     0 & \mu & u_i \\
     0 & 0  & \mu \\
   \end{array}
 \right).\]
But then we have $L_{b_1}L_{b_2}=L_{b_2}L_{b_1}$. The braid relation
$L_{b_1}L_{b_2}L_{b_1}=L_{b_2}L_{b_1}L_{b_2}$ again implies that $L_{b_1}=L_{b_2}$, and hence
 $\phi ( t_{b_1}t_{b_2}^{-1})=I$.

(iv). Let $b_1$ and $b_2$ be two nonseparating simple closed curves intersecting at one point
such that they are disjoint from $a$.
In this case, $\ker (L_a-\lambda I)=E_{\lambda}^a$ is $1$-dimensional, $\ker (L_a-\lambda I)^2$ is $2$-dimensional,
and they are $L_{b_i}$--invariant for $i=1,2$. It follows that
\[ L_{b_i}=\left(
   \begin{array}{ccc}
     \lambda & * & * \\
     0 & \lambda &  * \\
     0 & 0 & \lambda \\
   \end{array}
 \right).\]
Since there is the braid relation $L_{b_1}L_{b_2}L_{b_1}=L_{b_2}L_{b_1}L_{b_2}$, Lemma~\ref{lem:matrix2}
implies that $L_{b_1}=L_{b_2}$. Hence, $\phi ( t_{b_1}t_{b_2}^{-1})=I$.

(v).
The eigenspace $E_\lambda^a$ is $2$-dimensional in this case.
Suppose first that $E_\lambda^a \neq E_\lambda^b$ for some (hence all)
nonseparating simple closed curve $b$
intersecting $a$ at one point. Choose two nonseparating simple closed curves $c_1$ and $c_2$ disjoint from $a\cup b$
such that $c_1$ intersects $c_2$ at one point. Then $E_\lambda^a \cap E_\lambda^b$ and $E_\lambda^a$
are $L_{c_i}$--invariant subspaces, so that with respect to a suitable basis
\[L_{c_i}=\left(
   \begin{array}{ccc}
     \lambda & * & * \\
     0 & \lambda &  * \\
     0 & 0 & \lambda \\
   \end{array} \right) .\]
Note that we have the braid relation
$ L_{c_1} L_{c_2}L_{c_1}=L_{c_2}L_{c_1} L_{c_2}$.
We now use Lemma~\ref{lem:matrix2} to conclude that $L_{c_1}=L_{c_2}$, so that
$\phi (t_{c_1}t_{c_2}^{-1})=I$.

If $E_\lambda^a = E_\lambda^b$ for some (hence all) $b$ intersecting $a$ at one point, then $E_\lambda^a = E_\lambda^x$
for all nonseparating simple closed curves $x$. This is because, by Theorem~\ref{thm:dualeqv}
there is a sequence
$a=a_0,a_1,a_2,\ldots,a_k=x$ of nonseparating simple closed curves such that
$a_{i-1}$ intersects $a_i$ at one point for all $1\leq i\leq k$,
 and $E_\lambda^{a_{i-1}} = E_\lambda^{a_{i}} $. Since $G$ is generated by Dehn twists about nonseparating
 simple closed curves, $E_\lambda^a $ is $\phi (G)$--invariant,
so that $\phi$ induces a homomorphism $\bar\phi: G \to \GL (E_\lambda^a )=\GL (2,\C)$. By Proposition~\ref{prop:2},
$\bar \phi (G)$ is cyclic, and hence $\bar \phi (f)=I$ all $f\in G'$. Thus the matrix
of $\phi (f)$ is of the form
\[ \left(
   \begin{array}{ccc}
     1 & 0 & z_1 \\
     0 & 1 &  z_2 \\
     0 & 0 & z_3 \\
   \end{array}
 \right).\]
Since the subgroup of $\GL (3,\C)$ consisting of upper triangular matrices are solvable and since $G' $ is perfect,
$\phi (G')$ is trivial.

(vi).
In this last case, the eigenspace $E_\lambda^a$ is, again, $2$-dimensional.
If $E_\lambda^a = E_\lambda^b$ for some  nonseparating simple closed curve $b$ intersecting $a$
at one point, then from Theorem~\ref{thm:dualeqv} and Lemma~\ref{lem:eigenspace}
we obtain that $E_\lambda^a = E_\lambda^x$ for all nonseparating simple closed curves $x$.
We conclude now as in the case (v) that $\phi (G')$ is trivial.

Suppose finally that $E_\lambda^a \neq E_\lambda^b$ for some (hence all) nonseparating simple closed curve $b$
intersecting $a$ at one point. By Lemma~\ref{lem:eigenspace}, $E_\lambda^x \neq E_\lambda^y$
for all nonseparating simple closed curves $x$ and $y$ intersecting once.
Let $a=c_4$ and $b=c_5$. Choose three nonseparating simple closed curves $c_1,c_2,c_3$
such that
\begin{itemize}
  \item $c_i$ intersects $c_j$ at one point if $|i-j|=1$, and
  \item $c_i$ is disjoint from $c_j$ if $|i-j|\geq 2$.
\end{itemize}
Let $v_1\in E_\lambda^a \cap E_\lambda^b$, $v_2\in E_\lambda^a$ and $v_3\in E_\mu^a$
so that $\{v_1,v_2,v_3\}$ is a basis. With respect to
this basis, the matrix of $L_a$ is its Jordan matrix.
Since $E_\lambda^a \cap E_\lambda^b$, $E_\lambda^a$ and $E_\mu^a$
are $L_{c_i}$--invariant for $i=1,2$, we have
\[L_{c_i}=\left(
   \begin{array}{ccc}
     x_i & w_i & 0 \\
     0 & y_i &  0 \\
     0 & 0 & z_i \\
   \end{array} \right) ,\]
 with $\{x_i,y_i,z_i\}=\{\lambda,\mu \}$. Then the braid relation
\begin{equation} \label{eqn:braid1}
 L_{c_1} L_{c_2}L_{c_1}=L_{c_2}L_{c_1} L_{c_2}
\end{equation}
gives us $x_1=x_2=x, y_1=y_2=y$ and $z_1=z_2=z$.

If $z=\mu$ then $x=y=\lambda$, and hence $L_{c_1}L_{c_2}=L_{c_2}L_{c_1}$. Now the braid
relation~(\ref{eqn:braid1}) implies again that $L_{c_1}=L_{c_2}$. It follows that $\phi (G')$ is trivial as above.

Suppose that $z=\lambda$. We will show that this case is not possible by arriving at a contradiction.
If $x=\lambda$ then $y=\mu$. But then we have $E_{\lambda}^{c_1}=E_{\lambda}^{c_2}$, which is a contradiction.
If $x=\mu$ then $y=\lambda$. Since $E_{\mu}^{c_1}$ is
$L_{c_3}$--invariant, we have
\[L_{c_3}=\left(
   \begin{array}{ccc}
     u & * & * \\
     0 & * &  * \\
     0 & * & * \\
   \end{array} \right) .\]
Now the braid relation $L_{a} L_{c_3}L_{a}=L_{c_3}L_{a} L_{c_3}$ gives $u=\lambda$,
while the braid relation $L_{c_3} L_{c_2}L_{c_3}=L_{c_2}L_{c_3} L_{c_2}$ gives $u=\mu$.
Since $\lambda\neq \mu$, we get a contradiction again.

This completes the proof of the proposition.
\end{proof}

Finally, we are in a position to prove Theorem~\ref{thm:1}.

\subsubsection{Proof of Theorem~\ref{thm:1}}
If $g=1$ then $n=1$, and $\phi :\mod (S)\to \GL(1,\C)$. Since $\GL(1,\C)$ is abelian,
$\phi$ factors through $H_1 (\mod (S);\Z)$. The theorem now follows from Theorem~\ref{thm:H_1}.
If $g=2$ then $n\leq 3$. These cases are proved in Propositions~\ref{prop:2} and ~\ref{prop:g=2}.
We assume that $g\geq 3$ and that the theorem holds true
for all surfaces of genus $g-1$. Since $\GL(k-1,\C)$ is isomorphic to a subgroup of $\GL(k,\C)$,
it suffices to prove the theorem for $n=2g-1$.

In what follows $R$ denotes a subsurface of $S$ diffeomorphic to a compact connected
surface of genus $g-1$ with one boundary component. We embed $\mod (R)$ into $\mod (S)$ by extending
self-diffeomorphisms of $R$ to $S$ by the identity. We set $G=\mod (S)$ and $H_R=\mod (R)$.

If, for some subsurface $R$, there exists a $\phi (H_R)$--invariant subspace $V$ of dimension $r$ with
$2\leq r \leq n-2$, then $\phi$ induces homomorphisms
$\phi_1: H_R \to \GL(V)=\GL (r,\C)$ and $\phi_2: H_R \to \GL(\C^n/V)=\GL (n-r,\C)$.
Note that $r\leq 2(g-1)-1$ and $n-r\leq 2(g-1)-1$. By assumption, the image of each $\phi_i$ is cyclic.
In particular, if $b$ and $c$ are two simple closed curves on $R$
intersecting at one point, then $\phi_i(t_b)=\phi_i(t_c)$ by Lemma~\ref{lem:abelianrep}. That is,
$\phi_i(t_bt_c^{-1})=I$. Since the commutator subgroup $H_R'$ of $H_R$ is generated normally by $t_bt_c^{-1}$
we get that $\phi_i(f)=I$ for all $f\in H_R'$. It follows that, with respect to some basis of $\C^n$,
\[ \phi (f) =
\left(
   \begin{array}{cc}
      I_{r} & F \\
      0 &  I_{n-r} \\
   \end{array}
 \right)\]
for all $f\in H_R'$.
Since the subgroup of $\GL (n,\C)$ consisting of such matrices is abelian and since $H_R'$ is perfect,
we conclude that $\phi (H_R')$ is trivial. In particular, we have $\phi (t_bt_c^{-1})=I$.
Since $G'=G$ is generated normally by $t_bt_c^{-1}$,
$\phi (G)$ is trivial.

We now fix a subsurface $R$ of genus $g-1$ with one boundary component.
Let $a$ and $b$ be two nonseparating simple closed curves
on $S$ intersecting at one point such that $a\cup b$ is disjoint from $R$.

CASE 1. Suppose that there is a subspace $V$ of dimension $r$ with $2\leq r\leq n-2$ which is a direct sum of eigenspaces
of $L_a$ (Note that there exists such a subspace if $L_a$ has at least three distinct eigenvalues.). Then $V$ is
$\phi (H_R)$--invariant. Hence, $\phi (G)$ is trivial.

CASE 2. Suppose that there is no subspace $V$ as in CASE 1. In particular, $L_a$ has at most two eigenvalues and
each eigenspace of $L_a$ is either $1$--dimensional, or $(n-1)$--dimensional, or $n$--dimensional.
The Jordan form of $L_a$ is one of the following four matrices:\\
 (i) $\lambda I_n$,  (ii)
 $ \left(
   \begin{array}{cccccc}
     \lambda & 1       &  0  & \cdots &  0  & 0 \\
     0       & \lambda & 1 & \cdots & 0  & 0 \\
     \vdots & \vdots & \vdots &\ddots &\vdots &\vdots\\
     0  & 0 & 0 & \cdots &\lambda & 1\\
     0  & 0 & 0 & \cdots & 0      & \lambda\\
   \end{array}
 \right),$
 (iii) $ \left(
   \begin{array}{cccccc}
     \lambda & 0       &  \cdots  & 0 &  0  & 0 \\
     0 & \lambda       &  \cdots  & 0 &  0  & 0 \\
     \vdots & \vdots & \ddots &\vdots &\vdots &\vdots\\
     0   & 0 & \cdots & \lambda & 0  & 0 \\
     0  & 0 & \cdots & 0 &\lambda & 1\\
     0  & 0 & \cdots & 0 & 0      & \lambda\\
   \end{array}
 \right),$\\
 (iv)
 $ \left(
   \begin{array}{cc}
     \lambda I_{n-1} & 0 \\
      0 &  \mu \\
   \end{array}
 \right).$\\
We fix a basis so that the matrix $L_a$ is equal to its Jordan form.

In the case (i), if $x$ is a nonseparating simple closed curve on $S$, then $L_x=\lambda I$
since it is conjugate to $L_a$. Since $G$ is generated by Dehn twists about
nonseparating simple closed curves, $\phi (G)$ is cyclic,
and hence it is trivial.

In the case (ii), the subspace $\ker (L_a-\lambda I)^2$ is a $\phi (H_R)$--invariant subspace of dimension $2$, so that $\phi (G)$ is trivial.

It remain to consider the cases (iii) and (iv). In these cases, the eigenspace $E_\lambda^a$ is of dimension $n-1$.
The eigenspace $E_\lambda^b$ is also of dimension $n-1$. If $E_\lambda^a\neq E_\lambda^b$,
then $E_\lambda^a\cap E_\lambda^b$ is a $\phi (H_R)$--invariant subspace of dimension $n-2(> 1)$. Hence,
$\phi (G)$ is trivial.

Suppose finally that $E_\lambda^a = E_\lambda^b$. It follows from Lemma~\ref{lem:eigenspace}
that $E_\lambda^x = E_\lambda^y$ for any two nonseparating simple closed curves $x$ and $y$ on $S$ intersecting at one point.
We then conclude from Theorem~\ref{thm:dualeqv} that $E_\lambda^a = E_\lambda^x$ for all such nonseparating $x$ on $S$.
Since $\mod (S)$ is generated by Dehn twists about nonseparating
simple closed curves, it follows that for each $f\in G$,
the matrix $\phi (f)$ is upper triangular. Hence, $\phi (G)$ is contained in the subgroup consisting of upper triangular matrices.
But this subgroup of $\GL (n,\C)$ is solvable. Since $G$ is perfect,
we conclude from Lemma~\ref{prop:solv} that $\phi (G)$ is trivial.

This completes the proof of Theorem~\ref{thm:1}.
\qed

\section{Nonorientable surfaces}
The purpose of this section is to prove Theorem~\ref{thm:2}.
So let $\phi:\mod (N)\to \GL (n,\C)$ be a homomorphism, where
$N$ is a nonorientable surface of genus $g\geq 3$ with $p\geq 0$ marked points, and $n\leq g-2$ for odd $g$
and $n\leq g-3$ for even $g$.

\subsubsection{Proof of Theorem~\ref{thm:2}}
If $g=3$ or $g=4$ then $n=1$, and $\GL(1,\C)$ is abelian, so that $\phi $ factors through
the first homology $H_1 (\mod (N);\Z)$.
Since $H_1 (\mod (N);\Z)$ is finite (c.f.~\cite{k98,k02gd}), the result follows. So we assume that $g\geq 5$.

Let $T$ denote the subgroup of the mapping class group $\mod (N)$.
Write $g=2r+1$ if $g$ is odd and $g=2r+2$ if $g$ is even. Hence, we have $r\geq 2$ and $n\leq 2r-1$.

Let $S$ be a compact orientable surface of genus $r$ with one boundary
component. Embed $S$ into $N$, so that the boundary of $S$ bounds a
M\"obius band (resp. Klein bottle with one boundary) on $N$ with $p$ marked points if $g$ is odd (resp. even). Extending diffeomorphisms
$S$ to $N$ by the identity induces a homomorphism
$\eta :\mod (S)\to \mod (N)$. Then the composition
$\phi \eta$ is a homomorphism from $\mod (S)$ to $\GL(n,\C)$.

\[
\begindc{\commdiag}[7]
\obj(10,8)[A]{$\mod (N)$}
\obj(10,2)[B]{$\mod (S)$}
\obj(25,5)[C]{\ $\GL(n,\C)$}
\mor{B}{A}{$\eta$}[1,0]
\mor{B}{C}{$\phi\eta$}[1,0]
\mor{A}{C}{$\phi$}[1,0]
\enddc
\]

If $r\geq 3$ ($g\geq 7$) then $\phi \eta$ is trivial by Theorem~\ref{thm:1}. It follows that $\phi (t_a)=I$ for any Dehn twist
supported on $S$. If $b$ is a two-sided nonseparating simple closed curve on $N$ whose complement in nonorientable,
it follows from Theorems~3.1 and~5.3 in~\cite{k02gd} that a Dehn twist $t_b$ about $b$ is conjugate to
a Dehn twist supported on $S$. Hence, $\phi (t_b)$ is trivial for all such $b$. Since $T$ is generated by such Dehn twists
(c.f.~\cite{k02gd}, proof of Theorem~5.12), we get that $\phi (T)$ is trivial.
We also know that the index of $T$ in $\mod (N)$ is $p!\cdot 2^{p+1}$ (\cite{k02gd}, Corollary~6.2). The conclusion of the theorem now follows.

If $r=2$ ($g=5$ or $g=6$) then $\phi \eta$ is cyclic by Theorem~\ref{thm:1}.
It follows that $\phi (t_at_b^{-1})=I$ for any two nonseparating simple closed curves
on $S$. Let $x$ and $y$ be two two-sided nonseparating simple closed curves on $N$ intersecting at one point
such that the complement of each is nonorientable.
It can easily be shown that there is a diffeomorphism $f$ of $N$ such that
$f(x\cup y)\subset S$, so that $t_xt_y^{-1}$ can be conjugated to $t_at_b^{-1}$,
where $a$ and $b$ are on $S$. Hence, $\phi (t_xt_y^{-1})=I$,
or $\phi (t_x)=\phi (t_y)$. We now apply Theorem~3.1 in~\cite{k02gd} to conclude that
$\phi (t_x)=\phi (t_y)$ for all two-sided nonseparating simple closed curves whose complements are nonorientable.
(Such simple closed curves are called essential in~\cite{k02gd}.)
Since $T$ is generated by such Dehn twists, we get that $\phi(T)$ is cyclic, so that $\phi(T')=\{ I\}$, where $T'$
is the commutator subgroup. Stukow proved that the index of $T'$ in $T$ is $2$ (c.f.~\cite{stuk}, Theorem~8.1).
We conclude that $\phi (\mod (N))$ is a finite group of order at most $p!\cdot 2^{p+2}$.

This finishes the proof of Theorem~\ref{thm:2}.
\qed

\begin{remark}
If $g\geq 7$ and if $N$ is closed, then the above proof shows that
the image of $\phi$ in Theorem~\ref{thm:2} is either trivial or is isomorphic to $\Z_2$.
In fact, it is easy to find a homomorphism whose image is $\Z_2$; send all Dehn twists to the identity and
crosscap slides (=$Y$-homeomorphisms) to an element of order $2$.
\end{remark}
\bigskip

\section{Homomorphisms to $\Aut(F_n)$ and $\Out(F_n)$}
We prove Theorem~\ref{thm:autF-n} in this section. The proof in the
case $n=1$ is trivial. So we assume that $2\leq n\leq 2g-1$.

Let $F_n$ denote the free group of rank $n$.
The action of the automorphism group $\Aut (F_n)$ of $F_n$ on the abelianization of $F_n$ gives rise to a surjective
homomorphism $\eta:\Aut (F_n)\to \GL(n,\Z)$. The kernel of this map is usually denoted by $IA_n$. Let $\Out (F_n)$ denote
the group of outer automorphisms of $F_n$, so that it is the quotient of $\Aut (F_n)$ with the (normal) subgroup
$\Inn (F_n)$ of inner automorphisms.

Suppose that $S$ is a closed surface of genus $2$. Choose five pairwise nonisotopic nonseparating simple closed curves
$c_1,c_2,c_3,c_4,c_5$ on $S$ such that $c_i$ is disjoint from $c_j$ if $|i-j|\geq 2$ and that
$c_i$ intersects $c_j$ at one point if $|i-j|= 1$. Let $t_i$ denote the right Dehn twists about $c_i$.
Then the group $\mod(S)$ is generated by $t_1,t_2,t_3,t_4,t_5$. Let us set
$\sigma=t_1t_2t_3t_4t_5$. It is well--known that $\sigma$ is a
torsion element of order six. One can easily show that
\begin{equation}\label{eqn:genus2}
    \sigma t_i \sigma^{-1}= t_{i+1}
\end{equation}
for each $1\leq i\leq 4$.

\begin{lemma} \label{lem:sigma2}
Let $g=2$. The normal closure of $\sigma^2$ in $\mod(S)$
is equal to the commutator subgroup of $\mod (S)$.
\end{lemma}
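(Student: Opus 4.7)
My plan is to prove the two inclusions $N \subseteq G'$ and $G' \subseteq N$ separately, where $N$ denotes the normal closure of $\sigma^2$ in $G = \mod(S)$ and $G'$ is the commutator subgroup.

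For $N \subseteq G'$, the idea is simply to observe that $\sigma^2$ already lies in $G'$ by looking at its image in the abelianization. By Theorem~\ref{thm:H_1}, $H_1(G;\Z) \cong \Z_{10}$, and by Lemma~\ref{lem:abelianrep}, every Dehn twist $t_i$ about a nonseparating simple closed curve has the same image $t$ in this abelian quotient. Hence $\sigma = t_1 t_2 t_3 t_4 t_5$ maps to $5t$ and $\sigma^2$ maps to $10t = 0$, so $\sigma^2 \in G'$. Since $G'$ is normal, $N \subseteq G'$.

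For the reverse inclusion $G' \subseteq N$, I would show that $G/N$ is abelian. In $G/N$ we have $\sigma^2 = 1$, so squaring the conjugation identity~(\ref{eqn:genus2}) gives $t_{i+2} = \sigma^2 t_i \sigma^{-2} = t_i$ in $G/N$ for $i=1,2,3$. Consequently $t_1 = t_3 = t_5$ and $t_2 = t_4$ in $G/N$; write $a$ and $b$ for these two common images. Since $G$ is generated by $t_1,\ldots,t_5$, the quotient $G/N$ is generated by $a$ and $b$.

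The concluding step is then short: since $|1-4| \geq 2$, the chain curves $c_1$ and $c_4$ are disjoint, so $t_1$ and $t_4$ commute in $G$ by the commutation relation for disjoint Dehn twists; projecting to $G/N$ gives $ab = ba$. Hence $G/N$ is abelian, which forces $G' \subseteq N$. Combining the two inclusions completes the proof. I do not foresee any serious obstacle: the point is that the relation $\sigma^2 = 1$ in $G/N$ partitions the chain twists into two classes by parity, and the existence of a disjoint (hence commuting) non-adjacent pair $c_1, c_4$ inside the chain provides precisely the extra commutation needed to collapse $G/N$ to an abelian group.
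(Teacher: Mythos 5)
Your proof is correct and follows essentially the same route as the paper: the inclusion $N\subseteq G'$ comes from the fact that $\sigma^2$ is a product of ten conjugate Dehn twists mapping to $0$ in $H_1(\mod(S);\Z)\cong\Z_{10}$, and the reverse inclusion uses the relation $\sigma t_i\sigma^{-1}=t_{i+1}$ to collapse the images of the generators in $G/N$ into two classes. The only (harmless) difference is at the very end: the paper additionally invokes the braid relation to deduce $q(t_1)=q(t_4)$, so that $G/N$ is in fact cyclic, whereas you stop at the sufficient and slightly more economical observation that $G/N$ is generated by the two commuting elements $q(t_1)$ and $q(t_4)$, hence abelian.
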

\begin{proof}
Let $N$ denote the the normal closure of $\sigma^2$ in $\mod(S)$, the intersection of all
normal subgroups containing $\sigma^2$.
Since any right Dehn twist in $\mod(S)$ about a nonseparating simple closed curve maps
to the generator of $H_1(\mod(S),\Z)$ under the natural homomorphism and since $\sigma^2$
is a product of $10$ such Dehn twists, we see that
$\sigma^2$ is contained in  $[\mod(S),\mod(S)]$. Hence, $N\subset [\mod(S),\mod(S)]$.

Let $q:\mod(S)\to \mod(S)/N$ denote the quotient map. The equality~\eqref{eqn:genus2} implies that
$q(t_1)=q(t_3)=q(t_5)$ and $q(t_2)=q(t_4)$. On the other hand, from the braid relation we get
\begin{eqnarray*}
  q(t_1) q(t_4) q(t_1)
   &=& q(t_1) q(t_2) q(t_1) \\
   &=& q(t_1t_2t_1) \\
   &=& q(t_2t_1t_2) \\
   &=& q(t_2) q(t_1) q(t_2) \\
   &=& q(t_4) q(t_1) q(t_4).
\end{eqnarray*}
Since $t_1$ and $t_4$ commute, we obtain $q(t_1)=q(t_4)$.
It follows that $\mod(S)/N$ is cyclic. In particular, $N$ contains $[\mod(S),\mod(S)]$,
finishing the proof of the lemma.
\end{proof}

\subsubsection{Proof of Theorem~\ref{thm:autF-n}}
Suppose that $H=\Aut (F_n)$. Let $\phi$ be the composition of $\varphi$ with $\eta$, so that
we have a commutative diagram:

\[
\begindc{\commdiag}[8]
\obj(20,8)[M]{$\mod (S)$}
\obj(6,2) [A1]{$1$}
\obj(12,2)[A2]{$IA_n$}
\obj(20,2)[A3]{$\Aut (F_n)$}
\obj(30,2)[A4]{$\GL(n,\Z)$}
\obj(37,2)[A5]{$1.$}
\mor{M}{A2}{$\varphi$}[2,1]
\mor{M}{A3}{$\varphi$}[1,0]
\mor{M}{A4}{$\phi$}[1,0]
\mor{A1}{A2}{$ $}[1,0]
\mor{A2}{A3}{$ $}[1,0]
\mor{A3}{A4}{$\eta $}[1,0]
\mor{A4}{A5}{$ $}[1,0]
\enddc
\]

We now apply Theorem~\ref{thm:1}. If $g\geq 3$ then $\phi$ is trivial, implying
that the image of $\varphi$ is contained in $IA_n$. Since $IA_n$ is torsion--free by a result of
Baumslag-Taylor~\cite{bt}, all torsion elements in $\mod (S)$ are contained
in the kernel of $\varphi$.
Since $\mod(S)$ is is generated by torsion elements (cf.~\cite{m-pap,h-k,k05tams}),
we conclude that $\varphi$ is trivial.

Suppose now that $g=2$. Theorem~\ref{thm:1} gives us that the image of $\phi$ is cyclic,
implying that the commutator subgroup $N$ of $\mod(S)$ is contained in the kernel of $\phi$.
Therefore, $\varphi (N)$ is contained in $IA_n$. Since $IA_n$ is torsion--free and since $\sigma^2\in N$
is a torsion element (of order $3$), we have $\varphi(\sigma^2)=1$.
It follows from Lemma~\ref{lem:sigma2} that $\varphi(N)$ is trivial.
Consequently, $\varphi$ factors through the natural homomorphism $\mod(S)\to H_1(\mod(S),\Z)$, and
so the image of $\varphi$ is a quotient of $\Z_{10}$.

This completes the proof of Theorem~\ref{thm:autF-n} for $\Aut (F_n)$.

The case $H=\Out(F_n)$ is completely similar
and uses the fact that the subgroup $IA_n/\Inn(F_n)$ is torsion--free
(c.f.~\cite{bt}). \qed

\end{document}